\documentclass[twoside]{amsart}

\usepackage[a4paper,top=2.54cm,bottom=2.54cm,left=3.17cm,right=3.17cm,%
            includehead,includefoot]{geometry}

\usepackage{amsmath,amssymb,amsfonts,amsthm}
\usepackage{graphicx}
\usepackage{subfigure}
\usepackage{float}
\usepackage{comment}
\usepackage{xcolor}
\usepackage[numbers,square,sort&compress]{natbib}
\usepackage{hyperref}
\usepackage{mathrsfs}  \hypersetup{colorlinks,citecolor=blue,linkcolor=blue,breaklinks=true}
\usepackage{booktabs}
\usepackage{colortbl}
\usepackage{caption}
\usepackage{enumitem}
\usepackage{epstopdf}
\usepackage{algorithm}
\usepackage{algpseudocode}
\usepackage{array}
\usepackage{bbding}
\usepackage{fancyhdr} 
\usepackage{fancyvrb} 
\usepackage{longtable}
\usepackage{listings}
\usepackage{rotating,rotfloat} 
\usepackage{yhmath} 
\usepackage{amsmath}
\numberwithin{equation}{section}
\usepackage{indentfirst}
\usepackage{fancyhdr}
\usepackage{titlesec}

\titleformat{\section}
{\normalfont\Large\bfseries\MakeUppercase}
{\thesection}{1em}{}

\titleformat{\subsection}
{\normalfont\large\bfseries}
{\thesubsection}{1em}{}

\theoremstyle{definition}
\newtheorem{theorem}{Theorem}[section]
\newtheorem{lemma}[theorem]{Lemma}
\newtheorem{corollary}[theorem]{Corollary}
\newtheorem{proposition}[theorem]{Proposition}

\newtheorem*{mainthm}{Main Theorem}
\numberwithin{equation}{section}

\newcommand{\bbm}{\begin{bmatrix}}
\newcommand{\ebm}{\end{bmatrix}}

\title{Differential operator approach to the $\imath$Howe duality }
\author{Qianqi Wang}
\address{
School of Mathematical Sciences,
East China Normal University,
Shanghai 200241, China
}
\email{51275500030@stu.ecnu.edu.cn}
\date{September 2026}

\begin{document}

\maketitle
\begin{abstract}
    The iHowe duality for iquantum groups of type AIII/IV was established in \cite{LX22} via two approaches, based on partial flag varieties and quantum coordinate coalgebras respectively. In this paper, we provide a new construction of this iHowe duality via quantum differential operators. Moreover, by deriving explicit action formulas, we prove that our construction coincides with the two constructions given in \cite{LX22}.
\end{abstract}
\section{Introduction}
\subsection{Background}
The classical Howe duality, involving commuting actions of a pair of Lie groups or Lie algebras on the same representation space, provides a representation theoretic treatment for classical invariant theory \cite{Ho89}. In the case of general linear groups and Lie algebras, Howe duality is closely related to Schur duality and the first fundamental theorem (FFT) and these three viewpoints are equivalent.

With the development of the theory of quantum groups, various quantum analogues of Howe duality have been developed. The first quantum Howe duality was constructed by Quesne \cite{Qu92} for the dual pair $(\mathbf{U}_q(\mathfrak{su}_3),\mathbf{U}_q(\mathfrak{u}_2))$. Subsequently, Noumi, Umeda and Wakayama \cite{NUW95,NUW96} obtained quantum analogues of the dual pairs $(\mathfrak{sl}_2,\mathfrak{so}_n)$ and $(\mathfrak{sp}_2,\mathfrak{so}_n)$.

For the general linear case, R.B. Zhang \cite{Zh02} established the Howe duality between the quantum groups $U_q(\mathfrak{gl}_m)$ and $U_q(\mathfrak{gl}_n)$ by introducing quantum coordinate algebras. More precisely, Zhang constructed a noncommutative analogue of the symmetric algebra on which $U_q(\mathfrak{gl}_m)$ and $U_q(\mathfrak{gl}_n)$ act. This approach was further applied to obtain quantum Howe dualities for the pairs
$
(U_q(\mathfrak{gl}_n),U_q(\mathfrak{so}_{2n})),
(U_q(\mathfrak{gl}_n),U_q(\mathfrak{so}_{2n+1})),
(U_q(\mathfrak{gl}_n),U_q(\mathfrak{sp}_{2n}))
$
in \cite{LZ03}. Moreover, the quantum coordinate algebra approach also leads to noncommutative analogues of the first fundamental theorem for the corresponding quantum groups \cite{LZZ11,Zh20}.

The Howe duality for quantum general linear supergroups was first established by Y. Zhang \cite{Zh20} via quantum coordinate superalgebras. This construction also leads to the first and second fundamental theorems of invariant theory for quantum general linear supergroups.
Recently, Luo, Yu and Zhou \cite{LYZ26} provided two other approaches to the Howe duality between quantum general linear supergroups. 

In \cite{Let99}, Letzter introduced the notion of quantum symmetric pairs. In this framework, an iquantum group $\mathbf{U}^\imath$ is a coideal subalgebra of a quantum group $\mathbf{U}_q(\mathfrak g)$ such that $(\mathbf{U}_q(\mathfrak g),\mathbf{U}^\imath)$ forms a quantum symmetric pair. The classification of quantum symmetric pairs and hence of iquantum groups can be described by Satake diagrams. In \cite{BW18}, Bao and Wang initiated the $\imath$-program by applying iquantum groups
associated with Satake diagrams of type AIII/AIV to the reformulation of
Kazhdan--Lusztig theory of type B/C. This demonstrated that many constructions
and results for ordinary quantum groups can be extended naturally to the
iquantum group setting.

Following the development of iquantum groups, Howe dualities for pairs of iquantum groups have also been established. In \cite{LX22}, Luo and Xu established Howe duality between a pair of iquantum groups of type AIII/AIV by two different approaches based on partial flag varieties and quantum coordinate coalgebras respectively. Moreover, they proved that these two constructions coincide and obtained the corresponding multiplicity-free decompositions.

\subsection{Main results}

Our main result is a new construction of iHowe duality via quantum differential operators. We construct an $\mathbb A$-linear space
$S^{\mathfrak{bc}}_{m|n}$, equip it with a
$(\mathbf U_n^{\mathfrak c},\mathbf U_m^{\mathfrak b})$-bimodule structure and give  the explicit action formulas. Finially, we prove that
$S^{\mathfrak{bc}}_{m|n}
\cong \mathcal V^{\mathfrak{bc}}_{m|n}\cong{}_{\mathbb A}\mathbb T^{\mathfrak{cb}}_{n|m}$  as a
$(\mathbf U_n^{\mathfrak c},\mathbf U_m^{\mathfrak b})$-module, where $\mathcal V^{\mathfrak{bc}}_{m|n}$ and
${}_{\mathbb A}\mathbb T^{\mathfrak{cb}}_{n|m}$ were introduced in
\cite{LX22}. 

\begin{mainthm}(Theorem \ref{Theorem: main thm})
    Let $\mathcal{V}^{\mathfrak{b}\mathfrak{c}}_{m|n}$ and ${}_\mathbb{A}\mathbb{T}^{\mathfrak{b}\mathfrak{c}}_{m|n}$ be the $(\mathbf{U}^{c}_n,\mathbf{U}^{b}_m)$-bimodules introduced in \cite{LX22}.  They are isomorphic as $(\mathbf{U}^{c}_n,\mathbf{U}^{b}_m)$-bimodules (see \cite[Theorem 5.7]{LX22}). As $(\mathbf{U}^{c}_n,\mathbf{U}^{b}_m)$-bimodules,
    \begin{equation*}
        \begin{aligned} S^{\mathfrak{b}\mathfrak{c}}_{m|n}\cong\mathcal{V}^{\mathfrak{b}\mathfrak{c}}_{m|n}:X^{[A]}\mapsto \tilde{t}^{(A)}.
        \end{aligned}
\end{equation*}
Thus, $S^{\mathfrak{b}\mathfrak{c}}_{m|n}\cong{}_\mathbb{A}\mathbb{T}^{\mathfrak{b}\mathfrak{c}}_{m|n}$ as $(\mathbf{U}^{c}_n,\mathbf{U}^{b}_m)$-bimodules.
\end{mainthm}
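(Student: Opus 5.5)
The proof will be a basis-matching argument. The map $\Phi: S^{\mathfrak b\mathfrak c}_{m|n}\to\mathcal V^{\mathfrak b\mathfrak c}_{m|n}$, $X^{[A]}\mapsto\tilde t^{(A)}$, sends one $\mathbb A$-basis to another. Both bases are indexed by the same set of matrices $A$: the divided-power-type monomials $X^{[A]}$ in the quantum differential operator construction, and the dual PBW-type elements $\tilde t^{(A)}$ of \cite{LX22}. So $\Phi$ is an $\mathbb A$-linear isomorphism, and it remains to show that $\Phi$ intertwines both actions.

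Since $\mathbf U^{\mathfrak c}_n$ and $\mathbf U^{\mathfrak b}_m$ are generated as algebras by their Chevalley-type generators, it suffices to check the generators. On the $\mathbf U^{\mathfrak c}_n$ side these are the elements $B_i$ and the $k_i$'s (for $\mathfrak c$-type also $t$ or the extra $B_0$-type element); on the $\mathbf U^{\mathfrak b}_m$ side they are the analogous ones. For each generator I will take the explicit action formula on $X^{[A]}$ derived earlier in the paper. These formulas come from realizing the generators as quantum differential operators, i.e. $q$-multiplication composed with $q$-derivations, and then expanding on divided powers. I will compare each with the corresponding formula for the action on $\tilde t^{(A)}$ in \cite{LX22}.

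The $\mathbf U^{\mathfrak b}_m$ and $\mathbf U^{\mathfrak c}_n$ generators $B_i$ are written as $F_i+\varsigma E_{\tau i}K_i^{-1}$-type combinations. Each action formula is therefore a sum of two terms: $A\mapsto A\pm E_{ij}\mp E_{i+1,j}$-type shifts with $q$-power coefficients. These coefficients are quantum integers $[a_{ij}+1]$ times $q^{\text{linear form in entries of }A}$. So the comparison reduces to two identities for each generator:
\begin{enumerate}
\item equality of the index shifts, which is immediate;
\item equality of the exponent linear forms and the quantum integer factors.
\end{enumerate}
The diagonal generators act by scalars $q^{\sum_j a_{ij}}$-type on both sides, so they are trivial.

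The main obstacle will be the middle generators touching the fixed point of the involution: $B_0$, or the generator pairing row $0$ with its symmetric partner in type $\mathfrak b$ versus $\mathfrak c$. There the symmetry $a_{ij}=a_{-i,-j}$ of the index matrices produces extra terms. Possibly these include a third "diagonal" term, or a coefficient like $[a_{0j}+1]$ with a doubled exponent. The normalizations of $X^{[A]}$ (divided powers accounting for the symmetric entries) must be matched precisely with the normalization of $\tilde t^{(A)}$.

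For these generators I will first check that the normalizations agree on small cases, $m$ or $n$ equal to $1$ or $2$. If a discrepancy is a uniform power $q^{f(A)}$, I will absorb it by verifying that the bar-twisted normalization used for $\tilde t^{(A)}$ matches the divided-power normalization of $X^{[A]}$. Then I will verify the general identity by reducing it to a single $q$-binomial-type identity in $a_{0j}$ and $a_{0,-j}$.

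Once all generators intertwine, $\Phi$ is a bimodule isomorphism. Composing with the isomorphism $\mathcal V^{\mathfrak b\mathfrak c}_{m|n}\cong{}_{\mathbb A}\mathbb T^{\mathfrak b\mathfrak c}_{m|n}$ of \cite[Theorem 5.7]{LX22} gives the last assertion.
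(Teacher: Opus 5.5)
Your plan matches the paper's proof: $X^{[A]}\mapsto\tilde t^{(A)}$ is a bijection between bases indexed by $\Xi^{\mathfrak b\mathfrak c}_{m|n}$, and the paper simply compares the generator formulas of Proposition \ref{prop: (Unc,Umb) module structure of Smnbc} (for $e_i,f_i,t_0,d_j$) with \cite[Proposition 5.5]{LX22}, then composes with \cite[Theorem 5.7]{LX22}. One correction: the $X^{[A]}$ are ordinary monomials, not divided powers, with coefficients $[a_{j,i+1}]$ and $[a_{j,i}-\delta_{i,0}\delta_{j,0}]$ rather than $[a_{ij}+1]$. The formulas are claimed to match term by term with no rescaling, so your contingency of absorbing a uniform $v^{f(A)}$ discrepancy is unnecessary, and it could not be done anyway without changing the stated map.
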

\subsection{Organization}

We organise the paper as follows. In Section \ref{Section 2}, we introduce some basic knowledge of quantum general linear groups and iquantum groups of type AIII/AIV.
In Section \ref{section:3}, we first construct an $\mathbb A$-linear space
$S^{\mathfrak{bc}}_n$ for $\mathfrak b,\mathfrak c\in\{\imath,\jmath\}$
by taking suitable subspaces of tensor products of polynomial algebras $S_N^N$.
Moreover, the elements of $S^{\mathfrak{bc}}_n$ are indexed by centrosymmetric matrices.
By observing that a centrosymmetric matrix is determined by its entries on half of the matrix, we define an injection from $S^{\mathfrak{bc}}_n$ into another representation space $S_N^N$(or $\tilde{S}^N_{N-1}$). Using the $\mathbf U_v(\mathfrak{gl}(n))$-module structures on polynomial
algebras given in \cite{DZ20}, we first construct the left $\mathbf U_n^{\jmath}$(respectively, $\mathbf U_n^{\imath}$)-module structure on $S_N^N$ (respectively, $\tilde{S}^N_{N-1}$) and then induce the left module structure on $S^{\imath\jmath}_n$(respectively, $S^{\jmath\imath}_n$) via the injection.
We provide explicit
formulas for the left actions.
Moreover, a right
$\mathbf U_n^{\mathfrak b}$-module structure on $S^{\mathfrak{bc}}_n$ is induced by an
anti-automorphism of $\mathbf U_n^{\mathfrak b}$ and an algebra isomorphism
$
\sigma:S^{\imath \jmath}_{n}\xrightarrow[]{}S^{\jmath \imath}_{n}
$. We also provide explicit
formulas for the right actions.
Furthermore, in Section \ref{section:4}, we construct an $\mathbb A$-linear space
$S^{\mathfrak{bc}}_{m|n}$ and naturally equip it with a
$(\mathbf U_n^{\mathfrak c},\mathbf U_m^{\mathfrak b})$-bimodule structure.
By comparing the explicit action formulas, we prove that
$S^{\mathfrak{bc}}_{m|n}
\cong \mathcal V^{\mathfrak{bc}}_{m|n}\cong{}_{\mathbb A}\mathbb T^{\mathfrak{cb}}_{n|m}$  as
$(\mathbf U_n^{\mathfrak c},\mathbf U_m^{\mathfrak b})$-modules.

\subsection*{Acknowledgment}
We thank Xirui Yu for many stimulating discussions and suggestions.
\section{Preliminary}\label{Section 2}

\subsection{Quantum general linear groups \(\mathbf{U}_v(\mathfrak{gl}_n)\)}
Denote $\mathbb{A}=\mathbb{K}(v)$, where $\mathbb{K}$ is an arbitrary field of characteristic $0$.
The quantum general linear groups $\mathbf{U}_v(\mathfrak{gl}_n)$ is defined as the algebra over $\mathbb{A}$, generated by $E_i,F_i(i=1,2,...,n-1)$ and $D_a^{\pm1}(a=1,2,...,n)$, subject to the following relations:
\begin{equation*}
    \begin{aligned}
    &D_aD_a^{-1}=D_a^{-1}D_a=1,
    \quad
    D_a^{\pm1}D_b^{\pm1}=D_b^{\pm1}D_a^{\pm1},
    \\
    &D_aE_iD_a^{-1}
    =
    v^{\delta_{a,i}-\delta_{a,i+1}}E_i,
    \quad
    D_aF_iD_a^{-1}
    =
    v^{\delta_{a,i+1}-\delta_{a,i}}F_i,
    \\
    &E_iF_j-F_jE_i
    =
    \delta_{ij}\frac{K_i-K_i^{-1}}{v-v^{-1}},
    \quad \mathrm{where} \ 
    K_i=D_iD_{i+1}^{-1},
    \\
    &E_iE_j=E_jE_i,
    \quad
    F_iF_j=F_jF_i,
    \quad
     \mathrm{for}\ |i-j|>1,
    \\
    &E_i^2E_j+E_jE_i^2
    =
    (v+v^{-1})E_iE_jE_i,
    \quad
    \mathrm{for}\ |i-j|=1,
    \\
    &F_i^2F_j+F_jF_i^2
    =
    (v+v^{-1})F_iF_jF_i,
    \quad
    \mathrm{for}\ |i-j|=1.
    \end{aligned}
\end{equation*}
The Hopf algebra structure on $\mathbf{U}_v(\mathfrak{gl}_n)$ is given by the comultiplication $\Delta$, the counit $\varepsilon$ and the antipode $S$ as follows:
\begin{equation*}
\begin{aligned}
&\Delta(E_i)=E_i\otimes K_i^{-1}+1\otimes E_i,
\quad
\Delta(F_i)=F_i\otimes 1+K_i\otimes F_i,
\quad\Delta(D_a)=D_a\otimes D_a,
\\
&\varepsilon(E_i)=0,
\quad
\varepsilon(F_i)=0,
\quad
\varepsilon(D_a)=1,
\\
&S(E_i)=-E_iK_i,
\quad
S(F_i)=-K_i^{-1}F_i,
\quad
S(D_a)=D_a^{-1}.
\end{aligned}
\end{equation*}
It's easy to check that there is an algebra anti-automorphism $\omega$ of $\mathbf{U}_v(\mathfrak{gl}_n)$ such that
\begin{equation}\label{equation: an anti-automorphsim of quantum group}
    \begin{aligned}
        \omega(E_i)=vF_iK_i^{-1},\quad \omega(F_i)=vE_iK_i,\quad \omega(D_a)=D_a.
    \end{aligned}
\end{equation}
\subsection{iquantum groups $\mathbf{U}^{\jmath}_n$}
We use the version of the iquantum groups given in \cite[Section 4.4\&4.5]{LX22}. Denote $N=2n+1$ and
$\mathbb{I}_N=\{-n,-n+1,\ldots,n-1,n\}$, $
\mathbb{I}_N^{\frac{1}{2}}=\{-n+\frac{1}{2},-n+\frac{3}{2},\ldots,n-\frac{1}{2}\}$. We take a shift by $-n-\frac{1}{2}$(respectively, $-n-1$) on the index of the generators $E_i,F_i$ (respectively, $D_j$) of $\mathbf{U}_v(\mathfrak{gl}_N)$. The iquantum groups $\mathbf{U}^{\jmath}_n$ is defined as the $\mathbb{A}$-subalgebra of $\mathbf{U}_v(\mathfrak{gl}_N)$ generated by
\begin{equation*}
    \begin{aligned}
        &e_i=E_{i+\frac{1}{2}}+D_i^{-1}D_{i+1}F_{-(i+\frac{1}{2})},\quad f_i=E_{-(i+\frac{1}{2})}+F_{i+\frac{1}{2}}D_{-(i+1)}^{-1}D_{-i}, \quad(0\leq i<n),\\
        &d_0^{\pm1}=D_0^{\pm1}, \quad  d_j^{\pm1}=(D_jD_{-j})^{\pm1} ,\quad (0< j\leq n).
    \end{aligned}
\end{equation*}
where $K_i=D_{i-\frac{1}{2}}D_{i+\frac{1}{2}}^{-1}(i\in \mathbb{I}_N^{\frac{1}{2}})$. We have
\begin{equation*}
\begin{aligned}
&\Delta(e_i)=e_i\otimes K_{i+\frac{1}{2}}^{-1}
+1\otimes E_{i+\frac{1}{2}}
+k_i^{-1}\otimes K_{i+\frac{1}{2}}^{-1}F_{-\left(i+\frac{1}{2}\right)},
\\
&\Delta(f_i)=f_i\otimes K_{-\left(i+\frac{1}{2}\right)}^{-1}
+1\otimes E_{-\left(i+\frac{1}{2}\right)}
+k_i\otimes F_{i+\frac{1}{2}}K_{-\left(i+\frac{1}{2}\right)}^{-1},
\\
&\Delta(d_j)=d_j\otimes d_j,
\end{aligned}
\end{equation*}
where $k_i=K_{i+\frac{1}{2}}K_{-i-\frac{1}{2}}^{-1}=d_i^{1+\delta_{i,0}}d_{i+1}^{-1},(0\leq i<n)$. So $\Delta(\mathbf{U}^{\jmath}_n)\subseteq \mathbf{U}^{\jmath}_n \otimes \mathbf{U}_v(\mathfrak{gl}_N)$.
\subsection{iquantum groups $\mathbf{U}^{\imath}_n$}
Denote
$\mathbb{I}_{2n}
=
\{-n+\frac{1}{2},-n+\frac{3}{2},\ldots,n-\frac{1}{2}\},
\mathbb{I}_{2n}^{\frac{1}{2}}
=
\{-n+1,-n+2,\ldots,n-1\}$. We take a shift by $-n$ (respectively, $-n-\frac{1}{2}$) on the index of the generators $E_i,F_i$ (respectively, $D_j$) of $\mathbf{U}_v(\mathfrak{gl}_{N-1})$. The iquantum groups $\mathbf{U}^{\imath}_n$ is defined as the $\mathbb{A}$-subalgebra of $\mathbf{U}_v(\mathfrak{gl}_{N-1})$ generated by
\begin{equation}\label{equation: generators of type i}
    \begin{aligned}
        e_i&=E_{i}+D_{i-\frac{1}{2}}^{-1}D_{i+\frac{1}{2}}F_{-i},\quad f_i=E_{-i}+F_iD_{-i-\frac{1}{2}}^{-1}D_{-i+\frac{1}{2}} \ (0<i<n),\\
        t_0&=E_0+vF_0D_{-\frac{1}{2}}^{-1}D_{\frac{1}{2}}+D_{-\frac{1}{2}}^{-1}D_{\frac{1}{2}},\quad d_j^{\pm1}=(D_{j-\frac{1}{2}}D_{-j+\frac{1}{2}})^{\pm1}\ (0<j\leq n),
    \end{aligned}
\end{equation}
where $K_i=D_{i-\frac{1}{2}}D_{i+\frac{1}{2}}^{-1},(i\in \mathbb{I}_{2n}^{\frac{1}{2}})$. We have
\begin{equation*}
\begin{aligned}
&\Delta(e_i)
=
e_i\otimes K_i^{-1}
+1\otimes E_i
+k_i^{-1}\otimes K_i^{-1}F_{-i},~\Delta(f_i)
=
f_i\otimes K_{-i}^{-1}
+1\otimes E_{-i}
+k_i\otimes F_iK_{-i}^{-1},
\\
&\Delta(t_0)
=
t_0\otimes K_0^{-1}
+1\otimes\left(E_0+vF_0K_0^{-1}\right),~\Delta(d_j)
=
d_j\otimes d_j,
\end{aligned}
\end{equation*}
where $k_i=K_iK_{-i}^{-1}=d_id_{i+1}^{-1},(0<i<n)$.
So $\Delta(\mathbf{U}^{\imath}_n)\subseteq \mathbf{U}^{\imath}_n \otimes \mathbf{U}_v(\mathfrak{gl}_{N-1})$.

\section{$\mathbb{A}$-vector space $S^{\mathfrak{b}\mathfrak{c}}_{n}$}\label{section:3}
Let $V$ be a $N$ dimensional $\mathbb{A}$-vector space and denote the symmetric algebra of $V$ by $S_N:=S(V)$. Then, $S_N\cong \mathbb{A}[X_1,X_2,...,X_N]$. Let $X^{(a)}:=X_1^{a_1}\cdots X_N^{a_N}$,
where $a=(a_1,\cdots,a_N)\in \mathbb{N}^{N}$.
We set $S_N^N:=S(V)^{\otimes N}$ and write the elements of $S_N^N$ as
$X^{[A]}=X^{[r_{-n}]}\otimes\cdots \otimes X^{[r_{n}]}$,
where $A\in \mathrm{Mat}_N{(\mathbb{N})}$ and $r_i$ is the $i+n+1$-row vector of $A$.

Recall the index sets given in \cite[Section 4.1]{LX22}:
\begin{equation}
\begin{aligned}
\Xi^{\imath \jmath}_{n}=
\{
(a_{i,j})_{-n\leq i ,  j\le n}
\in \mathrm{Mat}_N(\mathbb{N})
|a_{i,j} &= a_{-i,-j},a_{0,i}=0(i \neq 0),a_{0,0}=1\},\\
\Xi^{\jmath \imath}_{n}=
\{
(a_{i,j})_{-n\leq i, j\leq n}
\in \mathrm{Mat}_N(\mathbb{N})
|a_{i,j} &= a_{-i,-j},a_{i,0}=0(i\neq 0),a_{0,0}=1\}.
\end{aligned}
\end{equation}
For $\mathfrak{b},\mathfrak{c}\in\{\imath,\jmath\}$, we set  $S^{\mathfrak{b}\mathfrak{c}}_{n}$ be the subspace of $S_N^N$ with basis $\{X^{[A]}\in S_N^N \mid A\in \Xi^{\mathfrak{b}\mathfrak{c}}_{n}\}$.
In this section, we will give the $(\mathbf{U}^{c}_n,\mathbf{U}^{b}_n)$-module structure on $S^{\mathfrak{b}\mathfrak{c}}_{n}$.

\subsection{The left action of $\mathbf{U}^{\jmath}_n$ on $S^{\imath \jmath}_{n}$}
 Let $\epsilon_i\in\mathbb{N}^{N} $ be the row vector whose $i$-th entry is $1$ and all other entries are zero.  A left $\mathbf{U}_v(\mathfrak{gl}_N)$-module structure on $S(V)$ is given in \cite[Example 2.2]{DZ20}. More precisely, the action is given by
\begin{equation*}
E_{i+\frac{1}{2}}\cdot X^{(a)}=[a_{i+1}]X^{(a+\epsilon_{i}-\epsilon_{i+1})},~F_{i+\frac{1}{2}}\cdot X^{(a)}=[a_{i}]X^{(a-\epsilon_{i}+\epsilon_{i+1})},~D_h\cdot X^{(a)}=v^{a_h}X^{(a)},
\end{equation*}
where $-n\leq h\leq n$, $-n\leq i\leq n-1$. Then, we can induce the $\mathbf{U}_v(\mathfrak{gl}_N)$-module structure on $S_N^N$ by the comultiplication  of $\mathbf{U}_v(\mathfrak{gl}_N)$. The explicit action formula is given by
\begin{equation}\label{equation: the action of U on  SNN}
    \begin{aligned}
        E_{i+\frac{1}{2}}\cdot X^{[A]}&=\sum_{\substack{-n\leq j\leq n\\a_{j,{i+1}}>0}}v^{\sum\limits_{p>j}(a_{p,i+1}-a_{p,i})}[a_{j,i+1}]X^{[A+E_{j,i}-E_{j,i+1}]},\\
        F_{i+\frac{1}{2}}\cdot X^{[A]}&=\sum_{\substack{-n\leq j\leq n\\a_{j,{i}}>0}}v^{\sum\limits_{p<j}(a_{p,i}-a_{p,i+1})}[a_{j,i}]X^{[A-E_{j,i}+E_{j,i+1}]},
        \\
        D_h\cdot X^{[A]}&=v^{\sum\limits_{-n\leq j\leq n}a_{j,h}}X^{[A]},
    \end{aligned}
\end{equation}
where $-n\leq h\leq n, -n\leq i\leq n-1$ and $[n]=\frac{v^{n}-v^{-n}}{v-v^{-1}}$.
Since $\mathbf{U}^{\jmath}_n$ is a subalgebra of $\mathbf{U}_v(\mathfrak{gl}_N)$, we can view $S_N^N$ as a $\mathbf{U}^{\jmath}_n$ module.

Next, we give the $\mathbf{U}^{\jmath}_n$-module structure  on $S^{\imath \jmath}_{n}$. We use the lexicographical order $<$ on $\mathbb{Z}^2$ given by $(i,j)<(k,l)$ if and only if $i<k$ or $i=k$, $j<l$.

For any $A=(a_{i,j})_{N\times N}\in \Xi^{\imath \jmath}_{n}$, we define $a^{\#}_{i,j}=
\begin{cases}
\frac{a_{0,0}-1}{2},&(i,j)=(0,0)\\
a_{i,j},  & (i,j)\neq(0,0).
\end{cases}$ 
Define a map $\rho$ : $\Xi^{\imath \jmath}\to\mathrm{Mat}_N{(\mathbb{N})}$
satisfied
\begin{equation*}
    \begin{aligned}
        \rho(A)_{i,j}=0 ,\quad (i,j)<(0,0),\quad \rho(A)_{i,j}=a_{i,j}^{\#} ,\quad (i,j)\geq(0,0).
    \end{aligned}
\end{equation*}
Since $\rho$ is  injective, we  define a injection:
$\phi :S^{\imath \jmath}_{n}\xrightarrow{} S^N_N,~X^{[A]}\mapsto X^{[\rho(A)]}$.
Then for any $x\in \mathbf{U}^{\jmath}_n$, $X^{[A]}\in S^{\imath \jmath}_{n}$, we define \begin{equation*}
    \begin{aligned}
        x\cdot X^{[A]}:=\phi^{-1}( x\cdot\phi(X^{[A]})),
    \end{aligned}
\end{equation*} 
which gives a $ \mathbf{U}^{\jmath}_n$-module structure on $S^{\imath \jmath}_{n}$. It remains to verify that $\mathrm{Im}\phi$ is stable under the action  of $\mathbf{U}^{\jmath}_n$ to ensure that the definition is well-defined.

\begin{lemma}\label{lemma: Imphi is stable under the action of Uj}
Under the above notation,
    $\mathbf{U}^{\jmath}_n\cdot \mathrm{Im}\phi \subseteq \mathrm{Im}\phi$.
\end{lemma}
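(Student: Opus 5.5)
The plan is to describe $\mathrm{Im}\phi$ explicitly and then show that it is stable under all of $\mathbf{U}_v(\mathfrak{gl}_N)$. Since $\mathbf{U}^{\jmath}_n$ is a subalgebra of $\mathbf{U}_v(\mathfrak{gl}_N)$, this gives the lemma at once.

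\emph{Step 1: describe the image.} Let $A\in\Xi^{\imath\jmath}_n$. Then $a_{0,j}=0$ for $j\neq 0$ and $a_{0,0}=1$, so $a^{\#}_{0,0}=0$. By definition $\rho(A)_{i,j}=0$ for $(i,j)<(0,0)$, so the rows $-n,\dots,-1$ of $\rho(A)$ vanish. Row $0$ of $\rho(A)$ is $(0,\dots,0,a^{\#}_{0,0},a_{0,1},\dots,a_{0,n})$, which is also zero. Rows $1,\dots,n$ of $\rho(A)$ equal those of $A$.

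Conversely, the centrosymmetry $a_{i,j}=a_{-i,-j}$ means $A$ is determined by its rows $1,\dots,n$ (together with row $0$, which is fixed). These rows can be arbitrary, since each choice extends uniquely to an element of $\Xi^{\imath\jmath}_n$. Hence
\[
\mathrm{Im}\phi=\operatorname{span}_{\mathbb A}\{X^{[B]}\mid B\in\mathrm{Mat}_N(\mathbb N),\ b_{i,j}=0 \text{ for all } i\le 0\}.
\]

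\emph{Step 2: check the generators of $\mathbf{U}_v(\mathfrak{gl}_N)$ using \eqref{equation: the action of U on  SNN}.*}
\begin{itemize}
\item $D_h$ acts diagonally on each basis vector $X^{[B]}$, so it preserves $\mathrm{Im}\phi$.
\item $E_{i+\frac12}\cdot X^{[B]}$ is a sum over rows $j$ with $b_{j,i+1}>0$. Each term replaces $B$ by $B+E_{j,i}-E_{j,i+1}$, which only modifies row $j$. If $B$ has zero rows for all indices $\le 0$, then every such $j$ is $\ge 1$, and the new matrix still has zero rows for all indices $\le 0$.
\item The same argument applies to $F_{i+\frac12}$: its terms require $b_{j,i}>0$, which again forces $j\ge1$.
\end{itemize}
The $v$-power coefficients play no role here.

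\emph{Step 3: conclude.} Since $\mathrm{Im}\phi$ is stable under every generator of $\mathbf{U}_v(\mathfrak{gl}_N)$, it is a $\mathbf{U}_v(\mathfrak{gl}_N)$-submodule of $S^N_N$. In particular, it is stable under the generators $e_i$, $f_i$, $d_j^{\pm1}$ of $\mathbf{U}^{\jmath}_n$, which are built from $E$, $F$, $D^{\pm1}$.

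The only step needing care is Step 1. One must read the lexicographic condition correctly: it kills all negative rows together with the left half of row $0$. One must also use $a_{0,0}=1$ and $a_{0,i}=0$ to see that all of row $0$ vanishes. After that, the stability is essentially immediate.
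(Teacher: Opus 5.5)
Your proof is correct, but it takes a different and somewhat stronger route than the paper. Both arguments start from the same description of $\mathrm{Im}\phi$: all rows with index $\le 0$ vanish.

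\textbf{The paper's route.} It works directly with the generators of $\mathbf{U}^{\jmath}_n$. It expands $e_i=E_{i+\frac12}+D_i^{-1}D_{i+1}F_{-(i+\frac12)}$ and $f_i$ using the action formulas on $S^N_N$. It then writes out $e_i\cdot X^{[A]}$, $f_i\cdot X^{[A]}$, $d_0\cdot X^{[A]}$ and $d_j\cdot X^{[A]}$ with all their $v$-exponents, and reads off that only rows with nonnegative index are modified.

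\textbf{Your route.} You check only the generators $E_{i+\frac12}$, $F_{i+\frac12}$, $D_h^{\pm1}$ of the ambient $\mathbf{U}_v(\mathfrak{gl}_N)$. This needs neither the shape of $e_i,f_i$ nor any coefficients, and it shows that $\mathrm{Im}\phi$ is a $\mathbf{U}_v(\mathfrak{gl}_N)$-submodule. Conceptually, $\mathrm{Im}\phi=(\mathbb{A}\cdot 1)^{\otimes(n+1)}\otimes S(V)^{\otimes n}$ is a tensor product of submodules, since the constant polynomial spans the trivial module. Hence $S^{\imath\jmath}_n$ is just the restriction of $S(V)^{\otimes n}$ to $\mathbf{U}^{\jmath}_n$.

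\textbf{What each buys.} The paper's longer computation produces the explicit action formulas on $\mathrm{Im}\phi$. These are reused in the next proposition to derive the left action of $\mathbf{U}^{\jmath}_n$ on $S^{\imath\jmath}_n$; with your argument, that computation would still have to be done there. Your shortcut also depends on $S^N_N$ being a genuine $\mathbf{U}_v(\mathfrak{gl}_N)$-module. It therefore does not carry over verbatim to the analogous lemma for $\mathbf{U}^{\imath}_n$, where the middle tensor factor, the quotient $\tilde S_{N-1}$, is only a $\mathbf{U}^{\imath}_n$-module.
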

\begin{proof}
    Note that $\mathrm{Im}\phi=\mathrm{span}\{X^{[A]}\mid A=(a_{i,j}),a_{i,j}=0 \quad\mathrm{for} \ (i,j)\leq (0,0),a_{0,i}=0\}$.
    By (\ref{equation: the action of U on  SNN}), for any $X^{[A]}\in \mathrm{Im}\phi$, we have
    \begin{equation*}
        \begin{aligned}
            e_i\cdot X^{[A]}
            =&\sum_{\substack{0\leq j\leq n\\a_{j,{i+1}}>0}}
            v^{\sum\limits_{p>j}(a_{p,i+1}-a_{p,i})}[a_{j,i+1}]X^{[A+E_{j,i}-E_{j,i+1}]}\\
            &+\sum_{\substack{1\leq j\leq n\\a_{j,{-(i+1)}}>0}}v^{\sum\limits_{0\leq p<j}(a_{p,-(i+1)}-a_{p,-i})+\sum\limits_{0\leq p\leq n}(a_{p,i+1}-a_{p,i})-\delta_{i,0}}[a_{j,-(i+1)}]X^{[A-E_{j,-(i+1)}+E_{j,-i}]},\\
            f_i\cdot X^{[A]}
            =&\sum_{\substack{0\leq j\leq n\\a_{j,{-i}}>0}}
            v^{\sum\limits_{p>j}(a_{p,-i}-a_{p,-(i+1)})}[a_{j,-i}]X^{[A+E_{j,-(i+1)}-E_{j,-i}]}\\
            &+\sum_{\substack{0\leq j\leq n\\a_{j,{i}}>0}}v^{\sum\limits_{0\leq p<j}(a_{p,i}-a_{p,i+1})+\sum\limits_{0\leq p\leq n}(a_{p,-i}-a_{p,-(i+1)})}[a_{j,i}]X^{[A-E_{j,i}+E_{j,i+1}]},\\
            d_0 \cdot X^{[A]}=&v^{\sum\limits_{0\leq p\leq n}a_{p,0}}X^{[A]},~
            d_j\cdot X^{[A]}=v^{\sum\limits_{0\leq p\leq n}(a_{p,j}+a_{p,-j})}X^{[A]} \quad(0<j\leq n).
        \end{aligned}
    \end{equation*}
    Hence, $\mathbf{U}^{\jmath}_n\cdot \mathrm{Im}\phi\subseteq \mathrm{Im}\phi$.
    \end{proof}
    
    Thus, $S^{\imath \jmath}_{n}$ is actually a $\mathbf{U}^{\jmath}_n$-module.
    Let $E_{i,j}^{\theta}:=E_{i,j}+E_{-i,-j}$, $(-n\leq i,j\leq n)$.
      
    \begin{proposition}\label{prop:the left action of Uj on Snij}
        For any $X^{[A]}\in S^{\imath \jmath}_{n}$, $0\leq i< n,0<h\leq n$, we have
        \begin{equation*}
            \begin{aligned}
                e_i\cdot X^{[A]}=&\sum_{\substack{-n\leq j\leq n\\a_{j,{i+1}}>0}}
                v^{\sum\limits_{p>j}(a_{p,i+1}-a_{p,i})}[a_{j,i+1}]X^{[A+E_{j,i}^{\theta}-E_{j,i+1}^{\theta}]},\\
                f_i \cdot X^{[A]}=&\sum_{\substack{-n\leq j\leq 0\\a_{j,{i}}^{\#}>0}}
                v^{\sum\limits_{p<j}(a_{p,i}-a_{p,i+1})}[a_{j,i}-\delta_{i,0}\delta_{j,0}]X^{[A+E_{j,i+1}^{\theta}-E_{j,i}^{\theta}]}\\
                &+\sum_{\substack{0< j\leq n\\a_{j,i}>0}}v^{\sum\limits_{p<j}(a_{p,i}-a_{p,i+1})-\delta_{i,0}}[a_{j,i}]X^{[A+E_{j,i+1}^{\theta}-E_{j,i}^{\theta}]},\\
                d_0^{\pm1}\cdot X^{[A]}=&v^{\pm \sum\limits_{1\leq p\leq n}a_{p,0}}X^{[A]},~
                d_h^{\pm1}\cdot X^{[A]}=v^{\pm\sum\limits_{-n\leq p\leq n}a_{p,h}}X^{[A]}.
            \end{aligned}
        \end{equation*}
    \end{proposition}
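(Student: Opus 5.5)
The plan is to transport the formulas of Lemma \ref{lemma: Imphi is stable under the action of Uj} back through $\phi^{-1}$. By definition $x\cdot X^{[A]}=\phi^{-1}(x\cdot X^{[\rho(A)]})$. For $B=\rho(A)$ we have $b_{p,q}=a_{p,q}$ when $p>0$, $b_{0,q}=a_{0,q}=0$ for $q\neq 0$, $b_{0,0}=a^{\#}_{0,0}=\frac{a_{0,0}-1}{2}=0$ (since $a_{0,0}=1$ for $A\in\Xi^{\imath\jmath}_n$), and $b_{p,q}=0$ for $p<0$. So we substitute $B$ into the four formulas of that lemma, apply $\phi^{-1}$ term by term, and rewrite the result in terms of the full centrosymmetric matrix $A$.

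First, for each monomial we identify its preimage. A term $X^{[B\pm E_{j,k}\mp E_{j,l}]}$ with $j\geq 0$ (and, when $j=0$, using $b_{0,0}=a^{\#}_{0,0}$) is $\phi$ of $X^{[A\pm E^{\theta}_{j,k}\mp E^{\theta}_{j,l}]}$. The reason is that $\rho$ only records the entries in positions $\geq(0,0)$, and changing $(j,k)$ for $j>0$ forces the symmetric change at $(-j,-k)$. The case $j=0$ requires care: $E^\theta_{0,k}$ changes both $(0,k)$ and $(0,-k)$, but only $(0,\max)$ is visible in $\rho$. In $\Xi^{\imath\jmath}_n$ the row $0$ vanishes off the diagonal, so $j=0$ terms should only arise when the indices are compatible with $\mathrm{Im}\phi$, and the lemma already guarantees this. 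Next we match the sums. For $e_i$, the first sum over $0\leq j\leq n$ of the lemma becomes the $j\geq 0$ part of the claimed sum. The second sum, over $1\leq j\leq n$ with entries $a_{j,-(i+1)}$, is reindexed by $j\mapsto -j$ using $a_{j,-(i+1)}=a_{-j,i+1}$, so that $A-E^{\theta}_{j,-(i+1)}+E^{\theta}_{j,-i}=A+E^{\theta}_{-j,i}-E^{\theta}_{-j,i+1}$. This gives the $j<0$ part. The same reindexing handles $f_i$ and produces the first sum over $-n\leq j\leq 0$, while the second sum of the lemma stays as the $0<j\leq n$ part. The $d$'s are immediate. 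For $d_h$ we have $\sum_{p\geq 0}(a_{p,h}+a_{p,-h})=\sum_{p}a_{p,h}$ by centrosymmetry. For $d_0$ the sum is over $p\geq 0$ with $b_{0,0}=0$, i.e.\ $\sum_{1\leq p\leq n}a_{p,0}$.

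The main obstacle is the exponent bookkeeping in the reindexed sums. After $j\mapsto -j$ we must show
\[
\sum_{0\leq p<j}(a_{p,-(i+1)}-a_{p,-i})+\sum_{0\leq p\leq n}(a_{p,i+1}-a_{p,i})-\delta_{i,0}=\sum_{p>-j}(a_{p,i+1}-a_{p,i}).
\]
To do this we convert the first sum via $a_{p,-q}=a_{-p,q}$ into a sum over $-j<p\leq 0$. The overlap at $p=0$ is then corrected using the true value $a_{0,0}=1$ in $A$ versus $b_{0,0}=0$ in $B$; this is where $\delta_{i,0}$ is absorbed (when $i=0$, $a_{0,i}$ contributes $1$). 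Similarly, for $f_i$ the term $-\delta_{i,0}$ in the $j>0$ exponent comes from the same discrepancy. The quantum integer $[a_{j,i}-\delta_{i,0}\delta_{j,0}]$ at $j=0$ arises because $b_{0,0}=a^{\#}_{0,0}$ rather than $a_{0,0}$, and the condition $a^{\#}_{j,i}>0$ encodes this. I will check these identities case by case ($i=0$ versus $i>0$, and $j=0$ versus $j\neq 0$), tracking exactly where row $0$ and column $0$ contribute.
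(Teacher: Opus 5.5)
Your proposal is correct and is essentially the paper's proof. You substitute $\rho(A)$ into the formulas of Lemma \ref{lemma: Imphi is stable under the action of Uj}, reindex the second sum by $j\mapsto -j$ using $a_{i,j}=a_{-i,-j}$, and pull back through $\phi^{-1}$, with the $\delta_{i,0}$ corrections coming from the discrepancy $a_{0,0}=1$ versus $\rho(A)_{0,0}=a^{\#}_{0,0}=0$.

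One small correction: your displayed exponent identity only holds if its left-hand side uses the entries of $\rho(A)$ (that is, $a^{\#}$, as the paper writes); with the entries of $A$ it fails for $i=0$. Also, since row $0$ of $\rho(A)$ is zero, no $j=0$ terms ever occur.
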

    
    \begin{proof}
    We only give the detailed computation for $e_i\cdot X^{[A]}$. It follows from the proof of Lemma \ref{lemma: Imphi is stable under the action of Uj} and the relations $a_{0,i}=\delta_{i,0}$, $a_{i,j}=a_{-i,-j}$, we have 
        \begin{equation*}
            \begin{aligned}
                e_i&\cdot \phi(X^{[A]})\\
                =&\sum_{\substack{0\leq j\leq n\\a_{j,{i+1}}>0}}
                v^{\sum\limits_{p>j}(a_{p,i+1}-a_{p,i})}[a_{j,i+1}]X^{[\rho(A)+E_{j,i}-E_{j,i+1}]}\\
                &+\sum_{\substack{1\leq j\leq n\\a_{j,{-(i+1)}}>0}}v^{\sum\limits_{0\leq p<j}(a_{p,-(i+1)}-a_{p,-i}^{\#})+\sum\limits_{0\leq p\leq n}(a_{p,i+1}-a_{p,i}^{\#})-\delta_{i,0}}[a_{j,-(i+1)}]X^{[\rho(A)-E_{j,-(i+1)}+E_{j,-i}]}\\
                =&\sum_{\substack{0\leq j\leq n\\a_{j,{i+1}}>0}}
                v^{\sum\limits_{p>j}(a_{p,i+1}-a_{p,i})}[a_{j,i+1}]X^{[\rho(A)+E_{j,i}-E_{j,i+1}]}\\
                &+\sum_{\substack{-n\leq j\leq -1\\a_{j,{i+1}}>0}}v^{\sum\limits_{p>j}(a_{p,i+1}-a_{p,i})}[a_{j,i+1}]X^{[\rho(A)-E_{-j,-(i+1)}+E_{-j,-i}]}.\\
            \end{aligned}
        \end{equation*}
    Note that $E_{i,j}^{\theta}=E_{-i,-j}^{\theta}$, we have
    \begin{equation*}
        \begin{aligned}
            e_i\cdot X^{[A]}=&\phi^{-1}(e_i\cdot \phi(X^{[A]}))\\
            =&\sum_{\substack{0\leq j\leq n\\a_{j,{i+1}}>0}}
            v^{\sum\limits_{p>j}(a_{p,i+1}-a_{p,i})}[a_{j,i+1}]X^{[A+E_{j,i}^{\theta}-E_{j,i+1}^{\theta}]}\\
            &+\sum_{\substack{-n\leq j\leq -1\\a_{j,{i+1}}>0}}v^{\sum\limits_{p>j}(a_{p,i+1}-a_{p,i})}[a_{j,i+1}]X^{[A-E_{-j,-(i+1)}^{\theta}+E_{-j,-i}^{\theta}]}\\
            =&\sum_{\substack{-n\leq j\leq n\\a_{j,{i+1}}>0}}
            v^{\sum\limits_{p>j}(a_{p,i+1}-a_{p,i})}[a_{j,i+1}]X^{[A+E_{j,i}^{\theta}-E_{j,i+1}^{\theta}]}.
        \end{aligned}
    \end{equation*}
    The computation for the other formulas is similar.
\end{proof}

\subsection{The left action of $\mathbf{U}^{\imath}_n$ on $S^{\jmath \imath}_{n}$}
Before defining the $\mathbf{U}^{\imath}_n$-module structure on $S^{\jmath \imath}_{n}$, we first make the following preparations.

Let $V'$ be $N-1$ dimensional $\mathbb{A}$-vector space. We denote by $S_{N-1}:=S(V')$ the symmetric algebra of $V$. Then, $S_{N-1}\cong \mathbb{A}[X_1,X_2,...,X_{N-1}]$. We take a shift by $n+{\frac{1}{2}}$ on the index of $X_i$. For $(a)=(a_{-n+\frac{1}{2}},a_{-n+\frac{3}{2}},...,a_{n-\frac{1}{2}})\in \mathbb{N}^{N-1}$, we set
\begin{equation*}
    \begin{aligned}
        X^{(a)}=X_{-n+\frac{1}{2}}^{a_{{-n+\frac{1}{2}}}}\cdots X_{n-\frac{1}{2}}^{a_{{n-\frac{1}{2}}}}.
    \end{aligned}
\end{equation*}
A left $\mathbf{U}_v(\mathfrak{gl}_{N-1})$-module structure on $S_{N-1}$ is given in \cite[Example 2.2]{DZ20}. More precisely, the action is given by
\begin{equation}\label{equation: action formula on SN-1V}
    \begin{aligned}
        E_i\cdot X^{(a)}&=[a_{i+\frac{1}{2}}]X^{(a+e_{i-\frac{1}{2}}-e_{i+\frac{1}{2}})},\\
        F_i\cdot X^{(a)}&=[a_{i-\frac{1}{2}}]X^{(a-e_{i-\frac{1}{2}}+e_{i+\frac{1}{2}})},\\
        D_h\cdot X^{(a)}&=v^{a_h}X^{(a)},
    \end{aligned}
\end{equation}
where $i\in \{-n+1,\cdots,n-1\}, h\in \{-n+\frac{1}{2},\cdots,n-\frac{1}{2}\}$ and $[n]=\frac{v^{n}-v^{-n}}{v-v^{-1}}$ is the quantum integer.

Since iquantum group $\mathbf{U}^{\imath}_n$ is a subalgebra of $\mathbf{U}_v(\mathfrak{gl}_{N-1})$,
we can view $S_{N-1}$ as a $\mathbf{U}^{\imath}_n$-module. Next 
 we want to compute the submodule of $S_{N-1}$ generated by the elements
\begin{equation*}
    \begin{aligned}
        X^{(a+\epsilon_{-\frac{1}{2}})}-v^{-1}X^{(a+\epsilon_{\frac{1}{2}})},\quad(a_i=0 \ \mathrm{for} \ i<0).
    \end{aligned}
\end{equation*}

Next, we rewrite the elements of $S_{N-1}$ and introduce some constants. For $k=(k_1,\cdots,k_n)$, $b=(b_1,\cdots ,b_n)\in \mathbb{N}^n$, we set
\begin{equation*}
    \begin{aligned}
        X_{k,b}&=X_{-n+\frac{1}{2}}^{k_{n}}\cdots X_{-\frac{1}{2}}^{k_1}X_{\frac{1}{2}}^{b_1}\cdots X_{n-\frac{1}{2}}^{b_n},\quad\\
        K_k&=k_1+\cdots+k_n,\quad\\
        Q_{k,b}&=\frac{K_k(K_k+1)}{2}+\sum\limits_{1\leq r<s\leq n} b_rk_s,
        \quad C_{k,b}=v^{-Q_{k,b}}.
    \end{aligned}
\end{equation*}
Let $M$ be the subspace of $S_{N-1}$ defined by
\begin{equation*}
    \begin{aligned}
        M=\mathrm{span}_{\mathbb{A}}\{ R_{k,b}=X_{k,b}-C_{k,b}X_{0,k+b}\in S_{N-1} \mid k,b\in \mathbb{N} ^n\}.
    \end{aligned}
\end{equation*}

\begin{lemma}
    \begin{equation}\label{equation: relations of constants C}
        \begin{aligned}
            C_{k,b+\epsilon_i-\epsilon_{i+1}}&=v^{-k_{i+1}} C_{k,b},\quad
            C_{k,b-\epsilon_i+\epsilon_{i+1}}=v^{k_{i+1}} C_{k,b},\\
            C_{k+\epsilon_i-\epsilon_{i+1},b}&=v^{b_i}C_{k,b},\quad
            C_{k-\epsilon_i+\epsilon_{i+1},b}=v^{-b_i}C_{k,b},\\
            C_{k+e_1,b-e_1}&=v^{-k_1-1}C_{k,b},\quad
            C_{k-e_1,b+e_1}=v^{k_1}C_{k,b}.
        \end{aligned}
    \end{equation}
\end{lemma}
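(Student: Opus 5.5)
Since $C_{k,b}=v^{-Q_{k,b}}$, each identity in (\ref{equation: relations of constants C}) is equivalent to a statement about the change of the exponent $Q_{k,b}=\frac{K_k(K_k+1)}{2}+\sum_{1\leq r<s\leq n} b_rk_s$. The plan is to compute these differences directly from the definition, treating the two summands of $Q_{k,b}$ separately. Note that the second identity of each line follows from the first by substituting $b\mapsto b-\epsilon_i+\epsilon_{i+1}$ (respectively $k\mapsto k-\epsilon_i+\epsilon_{i+1}$, $(k,b)\mapsto(k-e_1,b+e_1)$) and inverting. Thus only three computations are needed: for $b\mapsto b+\epsilon_i-\epsilon_{i+1}$, for $k\mapsto k+\epsilon_i-\epsilon_{i+1}$, and for $(k,b)\mapsto(k+e_1,b-e_1)$. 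Throughout we take $1\leq i<n$ and $\epsilon_i=e_i$.

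First, for $b\mapsto b+\epsilon_i-\epsilon_{i+1}$, $K_k$ is unchanged. The bilinear term changes by
\[
\sum_{s>i}k_s-\sum_{s>i+1}k_s=k_{i+1}.
\]
Hence $Q$ increases by $k_{i+1}$, which gives $C_{k,b+\epsilon_i-\epsilon_{i+1}}=v^{-k_{i+1}}C_{k,b}$.

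Second, for $k\mapsto k+\epsilon_i-\epsilon_{i+1}$, $K_k$ is again unchanged. The bilinear term changes by
\[
\sum_{r<i}b_r-\sum_{r<i+1}b_r=-b_i,
\]
so $Q$ decreases by $b_i$, which gives the factor $v^{b_i}$.

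Third, for $(k,b)\mapsto(k+e_1,b-e_1)$, $K_k$ becomes $K_k+1$, so the quadratic term increases by
\[
\frac{(K_k+1)(K_k+2)-K_k(K_k+1)}{2}=K_k+1.
\]
In the bilinear term, lowering $b_1$ by one changes it by $-\sum_{s>1}k_s=-(K_k-k_1)$. Raising $k_1$ does not contribute, since $k_1$ only appears with $s=1$, which needs $r<1$. The total change is $K_k+1-K_k+k_1=k_1+1$, which gives $C_{k+e_1,b-e_1}=v^{-k_1-1}C_{k,b}$.

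For the inverse versions, substitute the shifted arguments in the formulas above. For example, applying the third formula at $(k-e_1,b+e_1)$ yields $C_{k,b}=v^{-(k_1-1)-1}C_{k-e_1,b+e_1}$, i.e. $C_{k-e_1,b+e_1}=v^{k_1}C_{k,b}$. Likewise, the first formula at $b-\epsilon_i+\epsilon_{i+1}$ uses the same $k_{i+1}$, since $k$ is unchanged; the second works the same way with $b_i$ unchanged. There is no real obstacle here. The only care needed is the index bookkeeping in the third case, namely that $k_1$ never enters the bilinear sum and that the shift of $K_k$ is accounted for.
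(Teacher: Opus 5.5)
Your proposal is correct and takes the same approach as the paper, which proves the lemma only by ``straightforward computations'' from $C_{k,b}=v^{-Q_{k,b}}$. Your bookkeeping supplies those computations accurately: $Q_{k,b}$ changes by $+k_{i+1}$, $-b_i$ and $k_1+1$ under the three basic shifts, and you correctly obtain the inverse identities by substituting the shifted arguments.
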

\begin{proof}
    It follows from straightforward computations.
\end{proof}

\begin{proposition}\label{prop: M is a Ui module}
      $M$ is a $\mathbf{U}^{\imath}_n$-submodule of $S_{N-1}$.
\end{proposition}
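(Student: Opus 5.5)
Since $M$ is spanned by the $R_{k,b}$ and $\mathbf{U}^{\imath}_n$ is generated by $e_i,f_i$ ($0<i<n$), $t_0$ and $d_j^{\pm1}$, it suffices to show that each generator maps every $R_{k,b}$ into $M$. Throughout, I use the key observation that $X_{k,b}\equiv C_{k,b}X_{0,k+b}$ modulo $M$: every monomial is congruent to a multiple of a monomial supported on the positive indices $\tfrac12,\dots,n-\tfrac12$. So for each generator $x$ I would compute $x\cdot X_{k,b}$ and $C_{k,b}\,x\cdot X_{0,k+b}$ by (\ref{equation: action formula on SN-1V}). I then reduce every resulting monomial $X_{k',b'}$ to $C_{k',b'}X_{0,k'+b'}$, and check that the two reduced expressions coincide. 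The constants are compared using (\ref{equation: relations of constants C}).

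The $d_j$ are immediate. The weight of $D_{j-\frac12}D_{-j+\frac12}$ on $X_{k,b}$ is $v^{k_j+b_j}$, which equals its weight on $X_{0,k+b}$, so $R_{k,b}$ is a $d_j$-eigenvector.

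For $e_i=E_i+D_{i-\frac12}^{-1}D_{i+\frac12}F_{-i}$ with $0<i<n$, the term $E_i$ moves $X_{i+\frac12}\to X_{i-\frac12}$, so it changes $b$ by $+\epsilon_i-\epsilon_{i+1}$. The term $F_{-i}$ moves $X_{-i-\frac12}\to X_{-i+\frac12}$, so it changes $k$ by $-\epsilon_{i+1}+\epsilon_i$, up to a diagonal factor $v^{b_{i+1}-b_i}$ in the shifted exponents. On $X_{0,k+b}$ only the $E_i$-part acts, giving $[k_i+b_i]\cdots$ wait, giving $[k_{i+1}+b_{i+1}]X_{0,k+b+\epsilon_i-\epsilon_{i+1}}$. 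On $X_{k,b}$ both parts act, producing two terms with coefficients $[b_{i+1}]$ and $[k_{i+1}]v^{b_{i+1}-b_i}$. After reduction by $C$, using $C_{k,b+\epsilon_i-\epsilon_{i+1}}=v^{-k_{i+1}}C_{k,b}$ and $C_{k+\epsilon_i-\epsilon_{i+1},b}=v^{b_i}C_{k,b}$, the identity to verify becomes
\[
v^{-k_{i+1}}[b_{i+1}]+v^{b_{i+1}}[k_{i+1}]=[k_{i+1}+b_{i+1}],
\]
which is the standard quantum integer identity. The generator $f_i$ is handled symmetrically, using the remaining relations in (\ref{equation: relations of constants C}).

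The main obstacle is $t_0=E_0+vF_0D_{-\frac12}^{-1}D_{\frac12}+D_{-\frac12}^{-1}D_{\frac12}$. This is the only generator that mixes the negative and positive halves across index $0$, so it is where the special quadratic term $K_k(K_k+1)/2$ in $Q_{k,b}$ matters. Here $E_0$ sends $X_{\frac12}\to X_{-\frac12}$, with effect $(k,b)\mapsto(k+e_1,b-e_1)$, and $F_0$ sends $X_{-\frac12}\to X_{\frac12}$, with effect $(k-e_1,b+e_1)$. Applied to $X_{0,k+b}$, the generator $t_0$ produces $E_0$-terms of the form $X_{e_1,k+b-e_1}$, which themselves must be reduced via $C_{e_1,\cdot}$. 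I would organise the computation by collecting the coefficients of $X_{0,k+b}$, $X_{0,k+b+e_1}$ and $X_{0,k+b-e_1}$. Only $X_{0,k+b}$ can actually appear after reduction, because $t_0$ preserves the total degree and the reduced monomials $X_{0,\cdot}$ are determined by the column sums. I then check the resulting scalar identity using $C_{k+e_1,b-e_1}=v^{-k_1-1}C_{k,b}$ and $C_{k-e_1,b+e_1}=v^{k_1}C_{k,b}$, together with the diagonal factor $v^{b_1-k_1}$. I expect this to reduce to an identity of the form
\[
v^{-k_1-1}[b_1]+v\,v^{k_1}[k_1]v^{b_1-k_1}+v^{b_1-k_1}=v^{-1}[k_1+b_1+1]-\cdots,
\]
of $q$-integer type; I would verify it first in the cases $k=0$ (the generators $X^{(a+\epsilon_{-\frac12})}-v^{-1}X^{(a+\epsilon_{\frac12})}$) and $k_1=1$ before doing general $k$. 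If the direct check becomes messy, a fallback is to show that $M$ is the submodule generated by the elements with $k=e_1$, using $e_i,f_i$ to move $k$ around, and to verify the $t_0$-stability only on those generators.
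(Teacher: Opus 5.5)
Your strategy is the same as the paper's. You reduce every monomial by $X_{k,b}\equiv C_{k,b}X_{0,k+b}$ modulo $M$ and compare the coefficients of the $X_{0,\cdot}$. Your treatment of $d_j$ and $e_i$ (and, symmetrically, $f_i$) also matches the paper, including the identity $v^{-k_{i+1}}[b_{i+1}]+v^{b_{i+1}}[k_{i+1}]=[k_{i+1}+b_{i+1}]$.

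\textbf{The gap is the $t_0$ case.} You rightly call it the crux, but you never carry it out. The identity you anticipate, with right-hand side $v^{-1}[k_1+b_1+1]-\cdots$, is not the correct one, and you defer to special cases. What is missing is the reduction of the right-hand side:
\begin{itemize}
\item Since $F_0$ acts on $X_{0,k+b}$ with coefficient $[0]=0$, one has $t_0\cdot X_{0,k+b}=[k_1+b_1]X_{\epsilon_1,k+b-\epsilon_1}+v^{k_1+b_1}X_{0,k+b}$.
\item Because $Q_{\epsilon_1,a}=1$, i.e.\ $C_{\epsilon_1,a}=v^{-1}$, this is congruent modulo $M$ to $(v^{-1}[k_1+b_1]+v^{k_1+b_1})X_{0,k+b}=[k_1+b_1+1]X_{0,k+b}$.
\end{itemize}
Your left-hand side is right, so the identity to check is
\[
v^{-k_1-1}[b_1]+v^{b_1+1}[k_1]+v^{b_1-k_1}=[k_1+b_1+1].
\]
This follows from $[k_1+1+b_1]=v^{-(k_1+1)}[b_1]+v^{b_1}[k_1+1]$ together with $[k_1+1]=v[k_1]+v^{-k_1}$. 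It gives the explicit expression $t_0\cdot R_{k,b}=[b_1]R_{k+\epsilon_1,b-\epsilon_1}+v^{b_1-k_1+1}[k_1]R_{k-\epsilon_1,b+\epsilon_1}+v^{b_1-k_1}R_{k,b}-[k_1+b_1]C_{k,b}R_{\epsilon_1,k+b-\epsilon_1}\in M$, exactly as in the paper.

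\textbf{Your fallback would not rescue the argument}, for three reasons:
\begin{itemize}
\item The generators $e_i,f_i$ with $0<i<n$ only shift $k$ by $\pm(\epsilon_i-\epsilon_{i+1})$. So they preserve $K_k$ and cannot produce any $R_{k,b}$ with $K_k\geq 2$ starting from $k=\epsilon_1$.
\item $t_0$ does not commute with $e_1,f_1$. Hence stability under $t_0$ cannot be checked on the generators $R_{\epsilon_1,a}$ alone.
\item Showing that the submodule generated by the $R_{\epsilon_1,a}$ is contained in $M$ already requires $M$ to be stable, which is the statement being proved.
\end{itemize}
The direct computation above is therefore needed.
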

\begin{proof}
    It suffices to show that the actions of generators $e_i,f_i,t_0,d_j^{\pm}$ of $\mathbf{U}^{\imath}_n$ on $R_{k,b}$ remain in $M$. It follows from (\ref{equation: action formula on SN-1V}), (\ref{equation: generators of type i}), (\ref{equation: relations of constants C}) that 
    \begin{equation*}
        \begin{aligned}
            e_i\cdot X_{k,b}=&[b_{i+1}]X_{k,b+\epsilon_i-\epsilon_{i+1}}+v^{b_{i+1}-b_i}[k_{i+1}]X_{k+\epsilon_i-\epsilon_{i+1},b}\\
            =&([b_{i+1}]C_{k,b+\epsilon_i-\epsilon_{i+1}}
            +v^{b_{i+1}-b_i}[k_{i+1}]C_{k+\epsilon_i-\epsilon_{i+1},b})X_{0,k+b+\epsilon_i-\epsilon_{i+1}}\\
            &+[b_{i+1}]R_{k,b+\epsilon_i-\epsilon_{i+1}}+v^{b_{i+1}-b_i}[k_{i+1}]R_{k+\epsilon_i-\epsilon_{i+1},b}\\
            =&(v^{-k_{i+1}}[b_{i+1}]+v^{b_{i+1}}[k_{i+1}])C_{k,b}X_{0,k+b+\epsilon_i-\epsilon_{i+1}}\\
            &+[b_{i+1}]R_{k,b+\epsilon_i-\epsilon_{i+1}}+v^{b_{i+1}-b_i}[k_{i+1}]R_{k+\epsilon_i-\epsilon_{i+1},b}.
        \end{aligned}
    \end{equation*}
    \begin{equation*}
        \begin{aligned}
            e_i.(C_{k,b}X_{0,k+b})&=[k_{i+1}+b_{i+1}]C_{k,b}X_{0,k+b+\epsilon_i-\epsilon_{i+1}}\\
            &=(v^{-k_{i+1}}[b_{i+1}]+v^{b_{i+1}}[k_{i+1}])C_{k,b}X_{0,k+b+\epsilon_i-\epsilon_{i+1}}.
        \end{aligned}
    \end{equation*}
    Thus,
    \begin{equation*}
        e_i.R_{k,b}=[b_{i+1}]R_{k,b+\epsilon_i-\epsilon_{i+1}}+v^{b_{i+1}-b_i}[k_{i+1}]R_{k+\epsilon_i-\epsilon_{i+1},b}\in M.
    \end{equation*}
    Similarly, we have
    \begin{equation*}
        \begin{aligned}
            f_i\cdot X_{k,b}=&[k_i]X_{k-\epsilon_i+\epsilon_{i+1},b}+v^{k_i-k_{i+1}}[b_i]X_{k,b-\epsilon_i+\epsilon_{i+1}}\\
            =&(v^{-b_i}[k_i]+v^{k_i}[b_i])C_{k,b}X_{0,k+b-\epsilon_i+\epsilon_{i+1}}+[k_i]R_{k-\epsilon_i+\epsilon_{i+1},b}+v^{k_i-k_{i+1}}[b_i]R_{k,b-\epsilon_i+\epsilon_{i+1}}.
        \end{aligned}
    \end{equation*}
    \begin{equation*}
        \begin{aligned}
            f_i.(C_{k,b}X_{0,k+b})
            =[k_i+b_i]C_{k,b}X_{0,k+b-\epsilon_i+\epsilon_{i+1}}=(v^{-b_i}[k_i]+v^{k_i}[b_i])C_{k,b}X_{0,k+b-\epsilon_i+\epsilon_{i+1}}.
        \end{aligned}
    \end{equation*}
    Thus,
    \begin{equation*}
        \begin{aligned}
            f_i.R_{k,b}=[k_i]R_{k-\epsilon_i+\epsilon_{i+1},b}+v^{k_i-k_{i+1}}[b_i]R_{k,b-\epsilon_i+\epsilon_{i+1}}\in M.
        \end{aligned}
    \end{equation*}
   Next, we compute for $t_0\cdot X_{k,b}$ as follows:
   \begin{equation*}
       \begin{aligned}
           t_0\cdot X_{k,b}
           =&[b_1]X_{k+\epsilon_1,b-\epsilon_1}+v^{b_1-k_1+1}[k_1]X_{k-\epsilon_1,b+\epsilon_1}+v^{b_1-k_1}X_{k,b}\\
           =&(v^{-k_1-1}[b_1]+v^{b_1+1}[k_1]+v^{b_1-k_1})C_{k,b}X_{0,k+b}\\
           &+[b_1]R_{k+\epsilon_1,b-\epsilon_1}+v^{b_1-k_1+1}[k_1]R_{k-\epsilon_1,b+\epsilon_1}+v^{b_1-k_1}R_{k,b}.
       \end{aligned}
   \end{equation*}
    \begin{equation*}
        \begin{aligned}
            t_0.(C_{k,b}X_{0,k+b})=&[k_1+b_1]C_{k,b}X_{\epsilon_1,k+b-\epsilon_1}+v^{k_1+b_1}C_{k,b}X_{0,k+b}\\
            =&[k_1+b_1+1]C_{k,b}X_{0,k+b}+[k_1+b_1]C_{k,b}R_{\epsilon_1,k+b-\epsilon_1}.
        \end{aligned}
    \end{equation*}
    Note that $[k_1+b_1+1]=v^{-k_1-1}[b_1]+v^{b_1}[k_1+1]=v^{-k_1-1}[b_1]+v^{b_1+1}[k_1]+v^{b_1-k_1}$, so we have
    \begin{equation*}
        \begin{aligned}
            t_0.R_{k,b}=[b_1]R_{k+\epsilon_1,b-\epsilon_1}+v^{b_1-k_1+1}[k_1]R_{k-\epsilon_1,b+\epsilon_1}+v^{b_1-k_1}R_{k,b}-[k_1+b_1]C_{k,b}R_{\epsilon_1,k+b-\epsilon_1}\in M.
        \end{aligned}
    \end{equation*}
    Finally, we have
    \begin{equation*}
        \begin{aligned}
        &d_j^{\pm1}\cdot X_{k,b}=v^{\pm(b_j+k_j)}X_{k,b}
        =v^{\pm(b_j+k_j)}C_{k,b}X_{0,k+b}=d_j^{\pm1}(C_{k,b}X_{0,k+b}).
        \end{aligned}
    \end{equation*}
    \begin{equation*}
        \begin{aligned}
            d_j^{\pm}.R_{k,b}=0\in M.
        \end{aligned}
    \end{equation*}
\end{proof}
Let $\tilde{S}_{N-1}:=S_{N-1}/M$, then $\tilde{S}_{N-1}$ is a $\mathbf{U}^{\imath}_n$ module by the above proposition. By abuse of notation, we also denote by $X^{(a)}$($X_{k,b}$) the elements of $\tilde{S}_{N-1}$ and always remember the relations $R_{k,b}=0$.
\begin{corollary}
    \begin{equation}\label{equation: the action of type i on X0,a}
        \begin{aligned}
            e_i\cdot X_{0,a}&=[a_{i+1}]X_{0,a+\epsilon_i-\epsilon_{i+1}},\quad
            f_i\cdot X_{0,a}=[a_i]X_{0,a-\epsilon_i+\epsilon_{i+1}},\\
            t_0\cdot X_{0,a}&=[a_1+1]X_{0,a},\quad \quad\quad
            d_j\cdot X_{0,a}=v^{a_j}X_{0,a}.
        \end{aligned}
    \end{equation}
\end{corollary}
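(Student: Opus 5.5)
The plan is to compute each generator on $X_{0,a}=X_{\frac12}^{a_1}\cdots X_{n-\frac12}^{a_n}$ in $S_{N-1}$ using (\ref{equation: action formula on SN-1V}) and the definitions (\ref{equation: generators of type i}). I will then reduce modulo $M$ by the relations $R_{k,b}=0$, that is $X_{k,b}=C_{k,b}X_{0,k+b}$ in $\tilde S_{N-1}$. All exponents of variables with negative index are zero in $X_{0,a}$, so most terms vanish.

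For $e_i$ ($0<i<n$), the term $E_i$ moves one power from $X_{i+\frac12}$ to $X_{i-\frac12}$, both of positive index. This gives $[a_{i+1}]X_{0,a+\epsilon_i-\epsilon_{i+1}}$. The second term $D^{-1}_{i-\frac12}D_{i+\frac12}F_{-i}$ applies $F_{-i}$, which needs a positive power of $X_{-i-\frac12}$, so it kills $X_{0,a}$. Equivalently, put $k=0$ in the formula for $e_i\cdot X_{k,b}$ in the proof of Proposition \ref{prop: M is a Ui module}. The same argument handles $f_i$: the term $E_{-i}$ needs a positive power of $X_{-i+\frac12}$, so it vanishes. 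The term $F_iD^{-1}_{-i-\frac12}D_{-i+\frac12}$ acts first by the diagonal part, which is trivial on $X_{0,a}$, and then by $F_i$, which gives $[a_i]X_{0,a-\epsilon_i+\epsilon_{i+1}}$. For $d_j=D_{j-\frac12}D_{-j+\frac12}$ the eigenvalue is $v^{a_j+0}$.

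The only step needing the quotient is $t_0$. Here $E_0$ sends $X_{0,a}$ to $[a_1]X_{\epsilon_1,a-\epsilon_1}$. The term $vF_0D^{-1}_{-\frac12}D_{\frac12}$ vanishes because $a_{-\frac12}=0$. The term $D^{-1}_{-\frac12}D_{\frac12}$ gives $v^{a_1}X_{0,a}$. In $\tilde S_{N-1}$ we use $X_{\epsilon_1,a-\epsilon_1}=C_{\epsilon_1,a-\epsilon_1}X_{0,a}$. With $k=\epsilon_1$ we get $K_k=1$, and $\sum_{r<s}b_rk_s=0$ since $k_s=0$ for $s\ge2$. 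Hence $Q=1$ and $C=v^{-1}$. The result is $(v^{-1}[a_1]+v^{a_1})X_{0,a}=[a_1+1]X_{0,a}$ by the standard quantum integer identity.

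No real obstacle is expected. The one point to handle carefully is the $t_0$ case, where the value $C_{\epsilon_1,a-\epsilon_1}=v^{-1}$ must be checked. This is the same computation already carried out for $t_0\cdot(C_{k,b}X_{0,k+b})$ in the proof of Proposition \ref{prop: M is a Ui module}, so it can be cited directly.
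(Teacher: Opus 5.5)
Your proposal is correct and follows the paper's approach. The paper's proof just says the formulas follow directly from the proof of Proposition~\ref{prop: M is a Ui module}, i.e.\ by setting $k=0$ in the computations there and using $C_{\epsilon_1,a-\epsilon_1}=v^{-1}$ for $t_0$, which is exactly what you carry out explicitly.
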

\begin{proof}
    It follows from the proof of Proposition \ref{prop: M is a Ui module} directly.
\end{proof}
 We set $\tilde{S}^N_{N-1}:=(S_{N-1})^{\otimes n}\otimes \tilde{S}_{N-1} \otimes  (S_{N-1})^{\otimes n}$ and write the elements of $\tilde{S}^N_{N-1}$ as
\begin{equation*}
    \begin{aligned}
        X^{[A]}=X^{(r_{-n})}\otimes\cdots\otimes X^{(r_n)},
    \end{aligned}
\end{equation*}
where $A \in \mathrm{Mat}_{N\times (N-1)}{(\mathbb{N})} $ and $r_i$ is the $i+n+1$-row vector of $A$.
Next, we define the $\mathbf{U}^{\imath}_n$-module structure on $\tilde{S}^N_{N-1}$. Since $\Delta^n(\mathbf{U}^{\imath}_n)\subseteq\mathbf{U}^{\imath}_n\otimes \Delta^{n-1}(\mathbf{U}_v(\mathfrak{gl}_{N-1}))$ and $\tilde{S}_{N-1}$(respectively, $S_{N-1}$) is $\mathbf{U}^{\imath}_n$(respectively, $\mathbf{U}_v(\mathfrak{gl}_{N-1}))$)module, we can define the module structure via comultiplication. Explicitly, for $x\in \mathbf{U}^{\imath}_n, X^{[A]}\in \tilde{S}^N_{N-1}$, we define $x\cdot X^{[A]}=(1^{\otimes n}\otimes \Delta^n(\mathbf{U}^{\imath}_n))\cdot X^{[A]} $.

Next, we define the $\mathbf{U}^{\imath}_n$-module structure on $S^{\jmath \imath}_{n}$.
For $A\in \Xi^{\jmath \imath}_{n}$, we associate it with a matrix $\rho(A)\in \mathrm{Mat}_{N\times (N-1)}(\mathbb{N})$ by deleting the $0$-th column of $A$ and then retaining only half of the remaining entries. Moreover, we shift the column indices of the matrices in $\mathrm{Mat}_{N\times (N-1)}(\mathbb{N})$ to half-integers in order to preserve the symmetry. More precisely,
\begin{equation*}
    \begin{aligned}
        \rho(A)_{i,j-\frac{1}{2}}=A_{i,j},\quad& for \ i\geq 0,j>0,\quad
        \rho(A)_{i,-j+\frac{1}{2}}=A_{i,-j}\quad for \ i> 0,j>0,\\
        &\mathrm{other} \  \mathrm{elements} \ \mathrm{are} \ 0.
    \end{aligned}
\end{equation*}
Then, we have an injection
$\phi :S^{\jmath \imath}_{n}\xrightarrow{} \tilde{S}^N_{N-1},
X^{[A]}\mapsto X^{[\rho(A)]}.$
Then for any $x\in \mathbf{U}^{\imath}_n$, $X^{[A]}\in S^{\jmath \imath}_{n}$, we define 
\begin{equation*}
    \begin{aligned}
        x\cdot X^{[A]}:=\phi^{-1}( x\cdot\phi(X^{[A]})). 
    \end{aligned}
\end{equation*}
Also, we need to check that $\mathrm{Im}\phi$ is stable under the action of $\mathbf{U}^{\imath}_n$.

\begin{lemma}\label{lemma: Imphi is stable under the action of Ui}
Under the above notation,
    $\mathbf{U}^{\imath}_n\cdot \mathrm{Im}\phi\subseteq\mathrm{Im}\phi.$
\end{lemma}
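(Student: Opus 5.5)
First I will describe $\mathrm{Im}\phi$ explicitly. By the definition of $\rho$, $\mathrm{Im}\phi$ is spanned by the $X^{[B]}$, $B\in\mathrm{Mat}_{N\times(N-1)}(\mathbb{N})$, such that $b_{i,j}=0$ whenever $i<0$. In row $0$, which is the factor $\tilde S_{N-1}$, we additionally need $b_{0,j}=0$ for $j<0$. Because of the relations $R_{k,b}=0$ in $\tilde S_{N-1}$, this second condition is no restriction at all: every $X_{k,b}$ in the middle factor equals $C_{k,b}X_{0,k+b}$. So the image consists of the tensors whose first $n$ factors (rows $-n,\dots,-1$) are the constant monomial $1$, with arbitrary entries in rows $0,\dots,n$. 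It is therefore enough to check that each generator $e_i,f_i,t_0,d_j^{\pm1}$ sends such a tensor to a combination of tensors of the same shape.

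Next I will expand the action via $x\cdot X^{[B]}=(1^{\otimes n}\otimes\Delta^n(x))\cdot X^{[B]}$. The operator $1^{\otimes n}$ acts trivially on rows $-n,\dots,-1$, so those factors stay equal to $1$. The remaining $n+1$ factors are row $0$ (the $\mathbf{U}^{\imath}_n$-module $\tilde S_{N-1}$ of Proposition \ref{prop: M is a Ui module}) and rows $1,\dots,n$ (the $\mathbf{U}_v(\mathfrak{gl}_{N-1})$-modules $S_{N-1}$). Iterating the coproduct formulas from Section \ref{Section 2}, $\Delta^n(e_i)$ is a sum of three kinds of terms:
\begin{itemize}
\item $e_i$ acting on row $0$, with group-likes on the later rows;
\item $E_i$ acting on one row $j\geq1$, with $1$ on the earlier rows and group-likes on the later rows;
\item $k_i^{-1}$ on row $0$ and $E_i$ or $F_{-i}$ on some row $j\ge 1$ (coming from $K_i^{-1}F_{-i}$), again with group-likes elsewhere.
\end{itemize}
The expansions of $f_i$, $t_0$ and $d_j$ are analogous. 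Each term acts on a single row by one of the formulas (\ref{equation: action formula on SN-1V}) or (\ref{equation: the action of type i on X0,a}), and by Cartan elements elsewhere. Every resulting term is a scalar multiple of some $X^{[B']}$ whose rows $-n,\dots,-1$ are still zero; this gives $x\cdot\mathrm{Im}\phi\subseteq\mathrm{Im}\phi$. Since $\mathbf{U}^{\imath}_n$ is generated by these elements, the lemma follows.

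The only delicate point is row $0$. An $F_{-i}$ or $E_{-i}$ acting there could produce exponents in negative columns, which would leave the image if the middle factor were $S_{N-1}$. This is exactly where the quotient by $M$ enters. Row $0$ is acted on only through $\mathbf{U}^{\imath}_n$ itself (by $e_i,f_i,t_0,d_j$ or the group-likes $k_i^{\pm1}$), never by a bare $\mathbf{U}_v(\mathfrak{gl}_{N-1})$ generator. Corollary (\ref{equation: the action of type i on X0,a}) then shows that the result is again of the form $X_{0,a'}$. For $t_0$, the term $1\otimes(E_0+vF_0K_0^{-1})$ acts on rows $j\ge1$ by ordinary $\mathfrak{gl}$ formulas, which move exponents only between the columns $\pm\frac12$ within those rows, so this is harmless. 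I will write out the resulting explicit formulas, as in Lemma \ref{lemma: Imphi is stable under the action of Uj}, to make the verification concrete.
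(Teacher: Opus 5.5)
Your proof is correct and is essentially the paper's argument: the paper likewise expands $\Delta^n(e_i)$, $\Delta^n(f_i)$ and $\Delta^n(t_0)$, lets each tensor factor act (the $\tilde S_{N-1}$ factor only through $\mathbf{U}^{\imath}_n$, via the corollary on the $X_{0,a}$), and records the resulting explicit formulas, which never touch rows $-n,\dots,-1$. Your first paragraph already settles the lemma without any computation, since it shows $\mathrm{Im}\phi=1^{\otimes n}\otimes\tilde S_{N-1}\otimes S_{N-1}^{\otimes n}$ while $x$ acts by $1^{\otimes n}\otimes\Delta^n(x)$; also correct the slip in your third kind of term, which is $K_i^{-1}F_{-i}$ on row $j$ with $k_i^{-1}$ on rows $0,\dots,j-1$, not ``$E_i$ or $F_{-i}$''.
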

\begin{proof}
    By induction, we have
    \begin{equation*}
        \begin{aligned}
            \Delta^{n}(e_i)&=e_i\otimes (K_i^{-1})^{\otimes n }+\sum\limits_{j=1}^n 1^{\otimes j}\otimes E_i \otimes(K_i^{-1})^{\otimes (n-j)}+\sum\limits_{j=1}^n(k_{i}^{-1})^{\otimes j}\otimes K_i^{-1}F_{-i}\otimes (K_i^{-1})^{\otimes (n-j)},\\
            \Delta^{n}(f_i)&=f_i\otimes (K_{-i}^{-1})^{\otimes n }+\sum\limits_{j=1}^n 1^{\otimes j}\otimes E_{-i} \otimes(K_{-i}^{-1})^{\otimes (n-j)}+\sum\limits_{j=1}^n(k_{i})^{\otimes j}\otimes F_{i}K_{-i}^{-1}\otimes (K_{-i}^{-1})^{\otimes (n-j)},\\
            \Delta^n(t_0)&=t_0\otimes(K_0^{-1})^{\otimes n}+\sum\limits_{j=1}^n1^{\otimes j }\otimes (E_0+vF_0K_0^{-1})\otimes(K_0^{-1})^{\otimes (n-j)}.
        \end{aligned}
    \end{equation*}
    Note that $\mathrm{Im}\phi=\mathrm{span}\{X^{[A]}\in \tilde{S}^N_{N-1}\mid A=(a_{i,j})\in \mathrm{Mat}_{N\times (N-1)}(\mathbb{N}), \ a_{i,j}=0 \ \mathrm{for} \ (i,j)<(0,0)\}$. For any $X^{[A]}\in \mathrm{Im}\phi$, it follows from (\ref{equation: action formula on SN-1V}), (\ref{equation: the action of type i on X0,a}) that 
    \begin{equation*}
        \begin{aligned}
            e_i&\cdot X^{[A]}\\
            =&v^{\sum\limits_{p>0}(a_{p,i+\frac{1}{2}}-a_{p,i-\frac{1}{2}})}[a_{0,i+\frac{1}{2}}]X^{[A+E_{0,i-\frac{1}{2}}-E_{0,i+\frac{1}{2}}]}
            +\sum\limits_{\substack{1\leq j\leq n\\a_{j,i+\frac{1}{2}}>0}}v^{\sum\limits_{p>j}(a_{p,i+\frac{1}{2}}-a_{p,i-\frac{1}{2}})}[a_{j,i+\frac{1}{2}}]X^{[A+E_{j,i-\frac{1}{2}}-E_{j,i+\frac{1}{2}}
            ]}\\
            &+\sum\limits_{\substack{1\leq j\leq n\\a_{j,-i-\frac{1}{2}}>0}}v^{a_{0,i+\frac{1}{2}}-a_{0,i-\frac{1}{2}}+\sum\limits_{0<p<j}(a_{p,-i-\frac{1}{2}}-a_{p,-i+\frac{1}{2}})+\sum\limits_{0<p\leq n}(a_{p,i+\frac{1}{2}}-a_{p,i-\frac{1}{2}})}[a_{j,-i-\frac{1}{2}}]X^{[A-E_{j,-i-\frac{1}{2}}+E_{j,-i+\frac{1}{2}}]}\\
            =&\sum\limits_{\substack{0\leq j\leq n\\a_{j,i+\frac{1}{2}}>0}}v^{\sum\limits_{p>j}(a_{p,i+\frac{1}{2}}-a_{p,i-\frac{1}{2}})}[a_{j,i+\frac{1}{2}}]X^{[A+E_{j,i-\frac{1}{2}}-E_{j,i+\frac{1}{2}}
            ]}\\
            &+\sum\limits_{\substack{1\leq j\leq n\\a_{j,-i-\frac{1}{2}}>0}}v^{\sum\limits_{0<p<j}(a_{p,-i-\frac{1}{2}}-a_{p,-i+\frac{1}{2}})+\sum\limits_{0\leq p\leq n}(a_{p,i+\frac{1}{2}}-a_{p,i-\frac{1}{2}})}[a_{j,-i-\frac{1}{2}}]X^{[A-E_{j,-i-\frac{1}{2}}+E_{j,-i+\frac{1}{2}}]},\\
    \end{aligned}
        \end{equation*}
    Similarly, we have        
    \begin{equation*}
        \begin{aligned}
            f_i \cdot X^{[A]}
            =&v^{\sum\limits_{p>0}(a_{p,-i+\frac{1}{2}}-a_{p,-i-\frac{1}{2}})}[a_{0,i-\frac{1}{2}}]X^{[A-E_{0,i-\frac{1}{2}}+E_{0,i+\frac{1}{2}}]}
            \\&+\sum\limits_{\substack{1\leq j\leq n\\ a_{j,-i+\frac{1}{2}}>0}}v^{\sum\limits_{p>j}(a_{p,-i+\frac{1}{2}}-a_{p,-i-\frac{1}{2}})}[a_{j,-i+\frac{1}{2}}]X^{[A+E_{j,-i-\frac{1}{2}}-E_{j,-i+\frac{1}{2}}]}\\
            &+\sum\limits_{\substack{1\leq j\leq n\\ a_{j,i-\frac{1}{2}}>0}}v^{\sum\limits_{0<p<j}(a_{p,i-\frac{1}{2}}-a_{p,i+\frac{1}{2}})+\sum\limits_{0 \leq p \leq n}(a_{p,-i+\frac{1}{2}}-a_{p,-i-\frac{1}{2}})}[a_{j,i-\frac{1}{2}}]X^{[A-E_{j,i-\frac{1}{2}}+E_{j,i+\frac{1}{2}}]}.
        \end{aligned}
    \end{equation*}
    \begin{equation*}
        \begin{aligned}
            t_0 \cdot X^{[A]}=&v^{\sum\limits_{p>0}(a_{p,\frac{1}{2}}-a_{p,-\frac{1}{2}})}[a_{0,\frac{1}{2}}+1]X^{[A]}+\sum\limits_{\substack{1\leq j \leq n \\ a_{j,\frac{1}{2}}>0}} v^{\sum\limits_{p>j}(a_{p,\frac{1}{2}}-a_{p,-\frac{1}{2}})}[a_{j,\frac{1}{2}}]X^{[A+E_{j,-{\frac{1}{2}}}-E_{j,\frac{1}{2}}]}\\
            &\quad \quad \quad \quad \quad \quad \quad \quad \quad \quad \quad \quad \   +\sum\limits_{\substack{1\leq j \leq n \\ a_{j,-\frac{1}{2}}>0}} v^{\sum\limits_{p\geq j}(a_{p,\frac{1}{2}}-a_{p,-\frac{1}{2}})+1}[a_{j,-\frac{1}{2}}]X^{[A-E_{j,-{\frac{1}{2}}}+E_{j,\frac{1}{2}}]}.\\
            d_j^{\pm1} \cdot X^{[A]}&=v^{\pm(\sum\limits_{p>0}(a_{p,j-\frac{1}{2}}+a_{p,-j+\frac{1}{2}})+a_{0,j-\frac{1}{2}})}X^{[A]}.
        \end{aligned}
    \end{equation*}
\end{proof}
\begin{proposition}\label{prop: the left action of Ui on Snji}
    For any $X^{[A]}\in S^{\jmath \imath}_{n}$, we have 
    \begin{equation*}
        \begin{aligned}
            e_i\cdot X^{[A]}&=\sum\limits_{\substack{-n\leq j\leq n\\a_{j,i+1}>0}}v^{\sum\limits_{p>j}(a_{p,i+1}-a_{p,i})}[a_{j,i+1}]X^{[A+E_{j,i}^{\theta}-E_{j,i+1}
            ^{\theta}]},\\
            f_i\cdot X^{[A]}&=\sum\limits_{\substack{-n\leq j\leq n\\ a_{j,i}>0}}v^{\sum\limits_{p<j}(a_{p,i}-a_{p,i+1})}[a_{j,i}]X^{[A+E_{j,i+1}^{\theta}-E_{j,i}^{\theta}]},\\
            t_0\cdot X^{[A]}&=v^{\sum\limits_{p>0}(a_{p,1}-a_{p,-1})+a_{0,1}}X^{[A]}+\sum\limits_{\substack{-n\leq j \leq n \\ a_{j,1}>0}} v^{\sum\limits_{p>j}(a_{p,1}-a_{p,-1})-a_{j,0}+\sum\limits_{k<0}\delta_{j,k}}[a_{j,1}]X^{[A+E_{j,-1}^{\theta}-E_{j,1}^{\theta}]},\\
            d_j^{\pm1}\cdot X^{[A]}&=v^{\pm \sum\limits_{-n\leq p\leq n}a_{p,j}}X^{[A]}.
        \end{aligned}
    \end{equation*}
\end{proposition}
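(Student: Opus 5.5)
The plan follows the proof of Proposition \ref{prop:the left action of Uj on Snij}. For $X^{[A]}\in S^{\jmath\imath}_n$ I start from the formulas for $x\cdot\phi(X^{[A]})$ obtained in the proof of Lemma \ref{lemma: Imphi is stable under the action of Ui}, with $\phi(X^{[A]})=X^{[\rho(A)]}$. Into these I substitute the entries of $\rho(A)$ as entries of $A$:
\[
\rho(A)_{p,j-\frac12}=a_{p,j}\quad (p\ge 0,\ j>0),\qquad \rho(A)_{p,-j+\frac12}=a_{p,-j}\quad (p>0,\ j>0).
\]
I then use the defining relations of $\Xi^{\jmath\imath}_n$, namely $a_{p,0}=0$ for $p\neq 0$, $a_{0,0}=1$ and $a_{i,j}=a_{-i,-j}$. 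Finally I apply $\phi^{-1}$. Here each matrix $\rho(A)\pm E_{j,\cdot}$ is replaced by $A\pm E^{\theta}_{j,\cdot}$, and I use that $E^{\theta}_{i,j}=E^{\theta}_{-i,-j}$.

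For $e_i$ (with $0<i<n$) the key steps are as follows.
\begin{itemize}
\item The first sum, over $0\le j\le n$, translates directly into the terms $j\ge 0$ of the claimed formula. It uses columns $i+\frac12\leftrightarrow i+1$ and $i-\frac12\leftrightarrow i$.
\item In the second sum, over $1\le j\le n$, which involves columns $-i\mp\frac12$, I substitute $j\mapsto -j$. Then $a_{j,-(i+1)}=a_{-j,i+1}$, and the monomial becomes $X^{[A+E^{\theta}_{-j,i}-E^{\theta}_{-j,i+1}]}$.
\item The main check is the exponent. By symmetry, $\sum_{0<p<j}(a_{p,-i-1}-a_{p,-i})$ equals $\sum_{-j<p<0}(a_{p,i+1}-a_{p,i})$. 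Adding $\sum_{0\le p\le n}(a_{p,i+1}-a_{p,i})$ gives $\sum_{p>-j}(a_{p,i+1}-a_{p,i})$, which is what the claim requires for the index $-j$.
\item The index $j=0$ never arises from the second sum, consistent with $a_{0,\cdot}$ being symmetric about the centre.
\end{itemize}
The same argument works for $f_i$. For $d_j$ the exponent $\sum_{p>0}(a_{p,j}+a_{p,-j})+a_{0,j}$ equals $\sum_{p}a_{p,j}$ by symmetry.

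I expect $t_0$ to be the main obstacle, because of the quotient $\tilde S_{N-1}$ and the term $[a_{0,\frac12}+1]$. The plan is to rewrite
\[
[a_{0,1}+1]=v^{a_{0,1}}+v^{-1}[a_{0,1}]\cdot v\,(\cdots),
\]
or more precisely to split it as $v^{a_{0,1}}+v^{-1}[a_{0,1}]$. The first piece produces the diagonal term $v^{\sum_{p>0}(a_{p,1}-a_{p,-1})+a_{0,1}}X^{[A]}$. The second piece should be matched with the $j=0$ summand of the claimed sum. For $j=0$ we have $A+E^{\theta}_{0,-1}-E^{\theta}_{0,1}=A$, and the exponent carries $-a_{0,0}=-1$, which accounts for the factor $v^{-1}$.

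The remaining terms are handled as before. The terms $j\ge 1$ from the first sum match directly, because $a_{j,0}=0$ there. For the terms with $j\le -1$, coming from the third sum via $j\mapsto -j$, I verify two exponent identities:
\[
\sum_{p\ge j'}(a_{p,1}-a_{p,-1})+1=\sum_{p>-j'}(a_{p,1}-a_{p,-1})+1,
\]
using $a_{0,1}-a_{0,-1}=0$, and check that this agrees with the correction term $\sum_{k<0}\delta_{j,k}=1$. The delicate point throughout is the bookkeeping of the central row, and I would double-check the $j=0$ identification carefully.
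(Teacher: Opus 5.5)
Your proposal is correct and follows essentially the same route as the paper. You translate the formulas from the proof of Lemma~\ref{lemma: Imphi is stable under the action of Ui} through $\rho$, reindex the negative-column terms by $j\mapsto -j$ using $a_{i,j}=a_{-i,-j}$ and $E^{\theta}_{i,j}=E^{\theta}_{-i,-j}$, and for $t_0$ split $[a_{0,1}+1]=v^{a_{0,1}}+v^{-1}[a_{0,1}]$, absorbing the second piece as the $j=0$ summand via $-a_{0,0}=-1$.

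Two bookkeeping cautions:
\begin{enumerate}
\item For $t_0$, the identity $\sum_{p\ge j}(a_{p,1}-a_{p,-1})=\sum_{p>-j}(a_{p,1}-a_{p,-1})$ (with $j\ge 1$) needs the cancellation of every pair $p,-p$ with $|p|<j$, not only $a_{0,1}=a_{0,-1}$.
\item For $f_i$, the exponent printed in the third sum of the lemma's proof omits the contribution $a_{0,i}-a_{0,i+1}$ of $k_i$ acting on the central tensor factor. You should recompute it directly from $\Delta^n(f_i)$; this term is needed to obtain $\sum_{p<j}(a_{p,i}-a_{p,i+1})$.
\end{enumerate}
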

\begin{proof}
    By the relation $a_{i,j}=a_{-i,-j}$ and the proof of Lemma \ref{lemma: Imphi is stable under the action of Ui}, we have    
    \begin{equation*}
        \begin{aligned}
            t_0 \cdot X^{[A]}=&v^{\sum\limits_{p>0}(a_{p,1}-a_{p,-1})}[a_{0,1}+1]X^{[\rho(A)]}+\sum\limits_{\substack{1\leq j \leq n \\ a_{j,1}>0}} v^{\sum\limits_{p>j}(a_{p,1}-a_{p,-1})}[a_{j,1}]X^{[\rho(A)+E_{j,-{\frac{1}{2}}}-E_{j,\frac{1}{2}}]}\\
            &+\sum\limits_{\substack{1\leq j \leq n \\ a_{j,-1}>0}} v^{\sum\limits_{1\leq p \leq n}(a_{p,1}-a_{p,-1})+\sum\limits_{p<j}(a_{p,-1}-a_{p,1})+1}[a_{j,-1}]X^{[\rho(A)-E_{j,-{\frac{1}{2}}}+E_{j,\frac{1}{2}}]}\\
            =&v^{\sum\limits_{p>0}(a_{p,1}-a_{p,-1})}(v^{a_{0,1}}+v^{-1}[a_{0,1}])X^{[\rho(A)]}+\sum\limits_{\substack{1\leq j \leq n \\ a_{j,1}>0}} v^{\sum\limits_{p>j}(a_{p,1}-a_{p,-1})}[a_{j,1}]X^{[\rho(A)+E_{j,-{\frac{1}{2}}}-E_{j,\frac{1}{2}}]}\\
            &+\sum\limits_{\substack{-n\leq j \leq -1 \\ a_{j,1}>0}} v^{\sum\limits_{p>j}(a_{p,1}-a_{p,-1})+1}[a_{j,1}]X^{[\rho(A)-E_{-j,-{\frac{1}{2}}}+E_{-j,\frac{1}{2}}]}.
        \end{aligned}
    \end{equation*}
    Note that $a_{0,0}=1$ and $a_{j,0}=0(j\neq 0)$, we have
    \begin{equation*}
        \begin{aligned}
             t_0\cdot X^{[A]}=&\phi^{-1}(t_0\cdot \phi(X^{[A]}))\\
             =&v^{\sum\limits_{p>0}(a_{p,1}-a_{p,-1})}(v^{a_{0,1}}+v^{-1}[a_{0,1}])X^{[A]}+\sum\limits_{\substack{1\leq j \leq n \\ a_{j,1}>0}} v^{\sum\limits_{p>j}(a_{p,1}-a_{p,-1})}[a_{j,1}]X^{[A+E_{j,-1}^{\theta}-E_{j,1}^{\theta}]}\\
            &+\sum\limits_{\substack{-n\leq j \leq -1 \\ a_{j,1}>0}} v^{\sum\limits_{p>j}(a_{p,1}-a_{p,-1})+1}[a_{j,1}]X^{A-E_{-j,-1}^{\theta}+E_{-j,1}^{\theta}]}\\
            =&v^{\sum\limits_{p>0}(a_{p,1}-a_{p,-1})+a_{0,1}}X^{[A]}+\sum\limits_{\substack{-n\leq j \leq n \\ a_{j,1}>0}} v^{\sum\limits_{p>j}(a_{p,1}-a_{p,-1})-a_{j,0}+\sum\limits_{k<0}\delta_{j,k}}[a_{j,1}]X^{[A+E_{j,-1}^{\theta}-E_{j,1}^{\theta}]}.
        \end{aligned}
        \end{equation*}
        The computation for the other formulas is similar to the proof of \ref{prop:the left action of Uj on Snij}.
\end{proof}

\subsection{The right action of $\mathbf{U}^{b}_n$ on  $S^{\mathfrak{b}\mathfrak{c}}_{n}$}
By the definition of $S^{\mathfrak{b}\mathfrak{c}}_{n}$, there is an $\mathbb{A}$-linear isomorphism:
\begin{equation*}
    \begin{aligned}
        \sigma:S^{\imath \jmath}_{n}&\xrightarrow[]{}S^{\jmath \imath}_{n},\quad
        X^{[A]}\mapsto X^{[A^T]},
    \end{aligned}
\end{equation*}
where $A^T$ is the transpose of $A$.
We restrict the algebra anti-automorphism $\omega$ of $\mathbf{U}_v(\mathfrak{gl}_{N-1})$ given in  (\ref{equation: an anti-automorphsim of quantum group}) to the subalgebra $\mathbf{U}^{\imath}_n$, then
\begin{equation*}
    \begin{aligned}
        \omega(e_i)=vf_ik_i^{-1},\quad\omega(f_i)=ve_ik_i,\quad \omega(t_0)=t_0,\quad \omega(d_j^{\pm})=d_j^{\pm}.
    \end{aligned}
\end{equation*}
Hence, $\omega(\mathbf{U}^{\imath}_n)=\mathbf{U}^{\imath}_n$ and then $\omega$ is  an algebra anti-automorphism of $\mathbf{U}^{\imath}_n$.
We can define the right $\mathbf{U}^{\imath}_n$-module structure on $S^{\imath \jmath}_{n}$ by using $\sigma$ and $\omega$. Explicitly, for any $x\in \mathbf{U}^{\imath}_n,X^{[A]}\in S^{\imath \jmath}_{n}$, we define
\begin{equation*}
    \begin{aligned}
        X^{[A]}\cdot x=\sigma^{-1}(\omega(x)\cdot \sigma(X^{[A]})).
    \end{aligned}
\end{equation*}

Similarly, we restrict the algebra anti-automorphism $\omega$ of $\mathbf{U}_v(\mathfrak{gl}_N)$ given in  (\ref{equation: an anti-automorphsim of quantum group}) to the subalgebra $\mathbf{U}^{\jmath}_n$, then 
\begin{equation*}
    \begin{aligned}
        \omega(e_i)=vf_ik_i^{-1},\quad\omega(f_i)=v^{1+\delta_{i,0}}e_ik_i,\quad \omega(d_0)=d_0,\quad \omega(d_j^{\pm1})=d_j^{\pm1}.
    \end{aligned}
\end{equation*}
where $0\leq i<n,0<j\leq n$. Hence, $\omega(\mathbf{U}^{\jmath}_n)=\mathbf{U}^{\jmath}_n$ and then $\omega$  is an algebra anti-automorphism of $\mathbf{U}^{\jmath}_n$.
We define the right $\mathbf{U}^{\jmath}_n$-module structure on $S^{\jmath \imath}_{n}$. For any $X^{[A]}\in S^{\jmath \imath}_{n},x\in \mathbf{U}^{\jmath}_n$, we define
\begin{equation*}
    \begin{aligned}
        X^{[A]}\cdot x=\sigma(\omega(x)\cdot \sigma^{-1}(X^{[A]})).
    \end{aligned}
\end{equation*}

We now give the explicit formulas for the actions.
\begin{proposition}\label{prop:the right action of Ub on Snbc}
    For $X^{[A]}\in S^{\imath \jmath}_{n}$, we have 
    \begin{equation*}
        \begin{aligned}
        X^{[A]}\cdot e_i
        &=\sum\limits_{\substack{-n\leq j\leq n\\ a_{i,j}>0}}v^{\sum\limits_{p\geq j}(a_{i+1,p}-a_{i,p})+1}[a_{i,j}]X^{[A+E_{i+1,j}^{\theta}-E_{i,j}^{\theta}]},\\
        X^{[A]}\cdot f_i
        &=\sum\limits_{\substack{-n\leq j\leq n\\a_{i+1,j}>0}}v^{\sum\limits_{p\leq j}(a_{i,p}-a_{i+1,p})+1}[a_{i+1,j}]X^{[A+E_{i,j}^{\theta}-E_{i+1,j}
        ^{\theta}]},\\
        X^{[A]}\cdot t_0
        &=v^{\sum\limits_{p>0}(a_{1,p}-a_{-1,p})+a_{1,0}}X^{[A]}+\sum\limits_{\substack{-n\leq j \leq n \\ a_{1,j}>0}} v^{\sum\limits_{p>j}(a_{1,p}-a_{-1,p})-a_{0,j}+\sum\limits_{k<0}\delta_{j,k}}[a_{1,j}]X^{[A+E_{-1,j}^{\theta}-E_{1,j}^{\theta}]},\\
        X^{[A]}\cdot d_j^{\pm1}
        &=v^{\pm \sum\limits_{-n\leq p\leq n}a_{j,p}}X^{[A]}.
        \end{aligned}
    \end{equation*}
    For $X^{[A]}\in S^{\jmath \imath}_{n}$, we have
    \begin{equation*}
        \begin{aligned}
            X^{[A]}\cdot e_i=&\sum_{\substack{-n\leq j\leq 0\\a_{i,j}^{\#}>0}}
            v^{\sum\limits_{p\geq j}(a_{i+1,p}-a_{i,p})+1+\delta_{i,0}}[a_{i,j}-\delta_{i,0}\delta_{j,0}]X^{[A+E_{i+1,j}^{\theta}-E_{i,j}^{\theta}]}\\
            &+\sum_{\substack{0< j\leq n\\a_{i,j}>0}}v^{\sum\limits_{p\geq j}(a_{i+1,p}-a_{i,p})+1}[a_{i,j}]X^{[A+E_{i+1,j}^{\theta}-E_{i,j}^{\theta}]},\\
            X^{[A]}\cdot f_i=&\sum_{\substack{-n\leq j\leq n\\a_{i+1,j}>0}}
            v^{\sum\limits_{p\leq j}(a_{i,p}-a_{i+1,p})+1}[a_{i+1,j}]X^{[A+E_{i,j}^{\theta}-E_{i+1,j}^{\theta}]},\\
            X^{[A]}\cdot d_0^{\pm1}=&v^{\pm \sum\limits_{1\leq p\leq n}a_{0,p}}X^{[A]},\quad
            X^{[A]}\cdot d_j^{\pm1}=v^{\pm\sum\limits_{-n\leq p\leq n}a_{j,p}}X^{[A]}.
        \end{aligned}
    \end{equation*}
\end{proposition}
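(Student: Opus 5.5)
The plan is to compute each right action directly from its definition and then convert the result into a formula on the original matrix $A$. For $X^{[A]}\in S^{\imath\jmath}_n$ and $x\in\mathbf{U}^{\imath}_n$ we have $X^{[A]}\cdot x=\sigma^{-1}(\omega(x)\cdot X^{[A^T]})$, with $A^T\in\Xi^{\jmath\imath}_n$. So the first step is to insert $\omega(e_i)=vf_ik_i^{-1}$, $\omega(f_i)=ve_ik_i$, $\omega(t_0)=t_0$ and $\omega(d_j)=d_j$. The left actions of $f_i,e_i,t_0,d_j$ on $X^{[A^T]}$ are then read off from Proposition \ref{prop: the left action of Ui on Snji}. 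The case $X^{[A]}\in S^{\jmath\imath}_n$ is handled the same way: write $X^{[A]}\cdot x=\sigma(\omega(x)\cdot X^{[A^T]})$ with $A^T\in\Xi^{\imath\jmath}_n$, use $\omega(e_i)=vf_ik_i^{-1}$, $\omega(f_i)=v^{1+\delta_{i,0}}e_ik_i$ and $\omega(d_j)=d_j$, and take the left actions from Proposition \ref{prop:the left action of Uj on Snij}.

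Next, evaluate the Cartan factors. The element $k_i$ acts diagonally, with $k_i=d_id_{i+1}^{-1}$ in type $\imath$ and $k_i=d_i^{1+\delta_{i,0}}d_{i+1}^{-1}$ in type $\jmath$. Its eigenvalue on $X^{[A^T]}$ is $v^{\sum_p (a_{i,p}-a_{i+1,p})}$, taken after the $\delta_{i,0}$ adjustment in the $\jmath$ case. Here we use that column sums of $A^T$ are row sums of $A$.

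For $e_i$ the factor $k_i^{-1}$ acts first, on $X^{[A^T]}$, giving $v^{\sum_p(a_{i+1,p}-a_{i,p})}$. Then $f_i$ acts, and the term indexed by $j$ carries $v^{\sum_{p<j}(a_{i,p}-a_{i+1,p})}[a_{i,j}]$, where we rewrite $(A^T)_{j,i}=a_{i,j}$. Multiplying the two gives the exponent $\sum_{p\ge j}(a_{i+1,p}-a_{i,p})$, and the extra factor $v$ from $\omega$ supplies the $+1$. Under transposition, $E^{\theta}_{j,i+1}-E^{\theta}_{j,i}$ becomes $E^\theta_{i+1,j}-E^\theta_{i,j}$, which is the claimed formula.

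For $f_i$ the order is reversed: $e_i$ acts first and $k_i$ is then evaluated on the new matrix $A+E^\theta_{i,j}-E^\theta_{i+1,j}$. So we must track how the row sums $a_{i,\cdot}$ and $a_{i+1,\cdot}$ change, including the doubled contribution when the symmetric partner lands in the same row, which happens only for $i=0$. Combining $v^{\sum_{p>j}(a_{p,i+1}-a_{p,i})}$ with this shifted eigenvalue should produce $\sum_{p\le j}(a_{i,p}-a_{i+1,p})+1$. The $t_0$ and $d_j$ formulas follow at once from $\omega(t_0)=t_0$, $\omega(d_j)=d_j$ and transposition.

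I expect the main obstacle to be the boundary terms at $i=0$ in the $S^{\jmath\imath}_n$ case. There $a_{0,0}=1$, and the modified quantity $a^\#_{0,0}$ enters through Proposition \ref{prop:the left action of Uj on Snij}. The $f_0$ formula in that proposition has a shifted quantum integer $[a_{j,0}-\delta_{j,0}]$ and an extra $-\delta_{i,0}$ in the exponent, and $k_0=d_0^2d_1^{-1}$ together with $\omega(f_0)=v^2e_0k_0$ adds further $\delta_{i,0}$ shifts. I would check directly that these combine into the displayed exponent $\sum_{p\ge j}(a_{1,p}-a_{0,p})+1+\delta_{i,0}$ for $j\le 0$ and $+1$ for $j>0$. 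Doing this requires splitting the sum at $j=0$ and using $a_{0,p}=a_{0,-p}$.
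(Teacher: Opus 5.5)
Your plan follows the paper's own route: substitute $\omega$, apply the left-action formulas of Section~3 to $X^{[A^T]}$, combine exponents, and transpose back. Your treatment of $e_i$, $t_0$ and the $d_j$ is correct.

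The $f_i$ step, however, is wrong as written. In $\omega(f_i)=v^{1+\delta_{i,0}}e_ik_i$ the Cartan factor is the rightmost one, just as $k_i^{-1}$ is in $\omega(e_i)=vf_ik_i^{-1}$. So $k_i$ acts \emph{first}, on $X^{[A^T]}$ itself; the order is not reversed. Suppose you evaluate $k_i$ on the new matrix $A+E^{\theta}_{i,j}-E^{\theta}_{i+1,j}$, as you propose. Then row $i$ has gained $1$ and row $i+1$ has lost $1$. For $i=0$ in the $\jmath$ case, $d_0$ sees a gain of $1$ and enters squared. You therefore pick up a spurious factor $v^{2}$ for $i>0$ and $v^{3}$ for $i=0$, and the target exponent $\sum_{p\le j}(a_{i,p}-a_{i+1,p})+1$ does not come out. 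Your phrase ``should produce'' hides exactly this mismatch.

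With the correct order no row-sum tracking is needed. For $i>0$ the factor $k_i$ contributes $v^{\sum_p(a_{i,p}-a_{i+1,p})}$. For $i=0$ in the $\jmath$ case it contributes $v^{2\sum_{p\ge 1}a_{0,p}-\sum_p a_{1,p}}=v^{\sum_p(a_{0,p}-a_{1,p})-1}$, using $a_{0,p}=a_{0,-p}$ and $a_{0,0}=1$. The left action of $e_i$ contributes $v^{\sum_{p>j}(a_{i+1,p}-a_{i,p})}[a_{i+1,j}]$. Together with the prefactor $v^{1+\delta_{i,0}}$, the total is exactly $v^{\sum_{p\le j}(a_{i,p}-a_{i+1,p})+1}$ in all cases.

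A related slip is in your last paragraph. The $\delta_{i,0}$-shifts in the right $e_0$ formula on $S^{\jmath\imath}_n$ come only from $\omega(e_0)=vf_0k_0^{-1}$. They arise from the shifted $[a_{j,0}-\delta_{j,0}]$ and the $-\delta_{i,0}$ in the left $f_0$ formula, together with $k_0^{-1}$ acting by $v^{\sum_p(a_{1,p}-a_{0,p})+1}$. The element $\omega(f_0)=v^2e_0k_0$ plays no role there.
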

\begin{proof}
    By Proposition \ref{prop: the left action of Ui on Snji}, for any $X^{[A]}\in S^{\imath \jmath}_{n}$, we have 
    \begin{equation*}
        \begin{aligned}
            \omega(e_i)\cdot \sigma(X^{[A]})
            &=qf_id_i^{-1}d_{i+1}\cdot X^{[A^T]}\\
            &=\sum\limits_{\substack{-n\leq j\leq n\\ a_{i,j}>0}}v^{\sum\limits_{p<j}(a_{i,p}-a_{i+1,p})+\sum\limits_{-n\leq p \leq n}(a_{i+1,p}-a_{i,p})+1}[a_{i,j}]X^{[A^T+E_{j,i+1}^{\theta}-E_{j,i}^{\theta}]}\\
            &=\sum\limits_{\substack{-n\leq j\leq n\\ a_{i,j}>0}}v^{\sum\limits_{p\geq j}(a_{i+1,p}-a_{i,p})+1}[a_{i,j}]X^{[A^T+E_{j,i+1}^{\theta}-E_{j,i}^{\theta}]}.
        \end{aligned}
    \end{equation*}
    Thus,
    \begin{equation*}
        \begin{aligned}
            X^{[A]}\cdot e_i
            &=\sigma^{-1}(\omega(e_i)\cdot \sigma(X^{[A]}))=\sum\limits_{\substack{-n\leq j\leq n\\ a_{i,j}>0}}v^{\sum\limits_{p\geq j}(a_{i+1,p}-a_{i,p})+1}[a_{i,j}]X^{[A+E_{i+1,j}^{\theta}-E_{i,j}^{\theta}]}.
        \end{aligned}
    \end{equation*}

    On the other hand, it follows from Proposition \ref{prop:the left action of Uj on Snij} and the relation $a_{0,0}=1$ that for any $X^{[A]}\in S^{\jmath \imath}_{n}$, we have
    \begin{equation*}
        \begin{aligned}
            \omega(e_0)\cdot \sigma(X^{[A]})
            =&vf_id_0^{-2}d_1\cdot X^{[A^T]}\\
            =&\sum_{\substack{-n\leq j\leq 0\\a_{0,j}^{\#}>0}}
            v^{\sum\limits_{p<j}(a_{0,p}-a_{1,p})+\sum\limits_{-n\leq p \leq n}a_{1,p}-2\sum\limits_{1\leq p \leq n}a_{0,p}+1}[a_{0,j}-\delta_{j,0}]X^{[A^T+E_{j,1}^{\theta}-E_{j,0}^{\theta}]}\\
            &+\sum_{\substack{0< j\leq n\\a_{0,j}>0}}v^{\sum\limits_{p<j}(a_{0,p}-a_{1,p})+\sum\limits_{-n\leq p \leq n}a_{1,p}-2\sum\limits_{1\leq p \leq n}a_{0,p}+1-1}[a_{0,j}]X^{[A^T+E_{j,1}^{\theta}-E_{j,0}^{\theta}]}\\
            =&\sum_{\substack{-n\leq j\leq 0\\a_{0,j}^{\#}>0}}
            v^{\sum\limits_{p\geq j}(a_{1,p}-a_{0,p})+2}[a_{0,j}-\delta_{j,0}]X^{[A^T+E_{j,1}^{\theta}-E_{j,0}^{\theta}]}\\
            &+\sum_{\substack{0< j\leq n\\a_{0,j}>0}}v^{\sum\limits_{p\geq j}(a_{1,p}-a_{0,p})+1}[a_{0,j}]X^{[A^T+E_{j,1}^{\theta}-E_{j,0}^{\theta}]}.\\
        \end{aligned}
    \end{equation*}
    For $i\neq 0$, we have 
    \begin{equation*}
        \begin{aligned}
            \omega(e_i)\cdot \sigma(X^{[A]})
            =&vf_id_i^{-1}d_{i+1}\cdot X^{[A^T]}\\
            =&\sum_{\substack{-n\leq j\leq 0\\a_{i,j}^{\#}>0}}
            v^{\sum\limits_{p<j}(a_{i,p}-a_{i+1,p})+\sum\limits_{-n\leq p \leq n}(a_{i+1,p}-a_{i,p})+1}[a_{i,j}-\delta_{i,0}\delta_{j,0}]X^{[A^T+E_{j,i+1}^{\theta}-E_{j,i}^{\theta}]}\\
            &+\sum_{\substack{0< j\leq n\\a_{i,j}>0}}v^{\sum\limits_{p<j}(a_{i,p}-a_{i+1,p})+\sum\limits_{-n\leq p \leq n}(a_{i+1,p}-a_{i,p})+1}[a_{i,j}]X^{[A^T+E_{j,i+1}^{\theta}-E_{j,i}^{\theta}]}\\
            =&\sum_{\substack{-n\leq j\leq 0\\a_{i,j}^{\#}>0}}
            v^{\sum\limits_{p\geq j}(a_{i+1,p}-a_{i,p})+1}[a_{i,j}-\delta_{i,0}\delta_{j,0}]X^{[A^T+E_{j,i+1}^{\theta}-E_{j,i}^{\theta}]}\\
            &+\sum_{\substack{0< j\leq n\\a_{i,j}>0}}v^{\sum\limits_{p\geq j}(a_{i+1,p}-a_{i,p})+1}[a_{i,j}]X^{[A^T+E_{j,i+1}^{\theta}-E_{j,i}^{\theta}]}.
        \end{aligned}
    \end{equation*}
    Combining the two cases above, we obtain 
    \begin{equation*}
        \begin{aligned}
            X^{[A]}\cdot e_i=&\sigma^{-1}(\omega(e_i)\cdot \sigma(X^{[A]}))\\
            =&\sum_{\substack{-n\leq j\leq 0\\a_{i,j}^{\#}>0}}
            v^{\sum\limits_{p\geq j}(a_{i+1,p}-a_{i,p})+1+\delta_{i,0}}[a_{i,j}-\delta_{i,0}\delta_{j,0}]X^{[A+E_{i+1,j}^{\theta}-E_{i,j}^{\theta}]}\\
            &+\sum_{\substack{0< j\leq n\\a_{i,j}>0}}v^{\sum\limits_{p\geq j}(a_{i+1,p}-a_{i,p})+1}[a_{i,j}]X^{[A+E_{i+1,j}^{\theta}-E_{i,j}^{\theta}]}.
        \end{aligned}
    \end{equation*}
    The computation for the other formulas is similar.
\end{proof}

\section{$\mathbb{A}$-vector space $S^{\mathfrak{b}\mathfrak{c}}_{m|n}$ and the $(\mathbf{U}^{\mathfrak{c}}_n,\mathbf{U}^{\mathfrak{b}}_m)$ module structure on it}\label{section:4}
Let $M=2m+1$. Recall the index sets given in \cite[Section 4.1]{LX22}:
\begin{equation*}
\begin{aligned}
\Xi^{\imath \jmath}_{m|n}=
\{
(a_{i,j})_{-m\leq i\leq m,-n \leq j\leq n}
\in \operatorname{Mat}_{M\times N}(\mathbb{N})
|a_{i,j} &= a_{-i,-j},a_{0,i}=0(i \neq 0),a_{0,0}=1\},\\
\Xi^{\jmath \imath}_{m|n}=
\{
(a_{i,j})_{-m\leq i\leq m,-n \leq j\leq n}
\in \operatorname{Mat}_{M\times N}(\mathbb{N})
|a_{i,j} &= a_{-i,-j},a_{i,0}=0(i\neq 0),a_{0,0}=1\}.
\end{aligned}
\end{equation*}
Let $l=\mathrm{max}\{m,n\},L=2l+1$ and $\mathfrak{b},\mathfrak{c}\in \{\jmath,\imath\}$. We define an injection $ \iota:\Xi^{\mathfrak{b}\mathfrak{c}}_{m|n}\xrightarrow[]{}\Xi^{\mathfrak{b}\mathfrak{c}}_{l}$ such that
\begin{equation*}
    \begin{aligned}
        \iota(A)_{i,j}=
        \begin{cases}
        a_{i,j}, &\lvert i \lvert \leq m,\lvert j \lvert \leq n,\\
        0,  &\mathrm{otherwise}.
        \end{cases}
    \end{aligned}
\end{equation*}
Let $S^{\mathfrak{b}\mathfrak{c}}_{m|n}:=\mathrm{Span}_{\mathbb{A}}\{ X^{[\iota(A)]}\in S^{\mathfrak{b}\mathfrak{c}}_{l}\mid A\in \Xi^{\mathfrak{b}\mathfrak{c}}_{m|n}\}$ be the subspace of $S^{\mathfrak{b}\mathfrak{c}}_{l}$.

On the other hand, we may regard $\mathbf{U}^{c}_n$, $\mathbf{U}^{b}_m$ as subalgebras of $\mathbf{U}^{c}_l$, $\mathbf{U}^{b}_l$ respectively, via the natural embeddings sending each generator to the corresponding generator with the same name. Then, it follows from Proposition \ref{prop:the left action of Uj on Snij}, \ref{prop: the left action of Ui on Snji} and \ref{prop:the right action of Ub on Snbc} that $S^{\mathfrak{b}\mathfrak{c}}_{m|n}$ is a $(\mathbf{U}^{c}_n,\mathbf{U}^{b}_m)$ submodule of $S^{\mathfrak{b}\mathfrak{c}}_{l}$. Moreover, we have the following action formulas.

\begin{proposition}\label{prop: (Unc,Umb) module structure of Smnbc}
    Let $X^{[A]}\in S^{\mathfrak{b}\mathfrak{c}}_{m|n}$. For $e_i,f_i\in \mathbf{U}^{c}_n$, we have
    \begin{equation*}
        \begin{aligned}
            e_i\cdot X^{[A]}=&\sum_{\substack{-m\leq j\leq m\\a_{j,{i+1}}>0}}
            v^{\sum\limits_{p>j}(a_{p,i+1}-a_{p,i})}[a_{j,i+1}]X^{[A+E_{j,i}^{\theta}-E_{j,i+1}^{\theta}]},\\
            f_i \cdot X^{[A]}=&\sum_{\substack{-m\leq j\leq 0\\a_{j,{i}}^{\#}>0}}
            v^{\sum\limits_{p<j}(a_{p,i}-a_{p,(i+1)})}[a_{j,i}-\delta_{i,0}\delta_{j,0}]X^{[A+E_{j,i+1}^{\theta}-E_{j,i}^{\theta}]}\\
            &+\sum_{\substack{0< j\leq m\\a_{j,i}>0}}v^{\sum\limits_{p<j}(a_{p,i}-a_{p,i+1})-\delta_{i,0}}[a_{j,i}]X^{[A+E_{j,i+1}^{\theta}-E_{j,i}^{\theta}]}.\\
        \end{aligned}
    \end{equation*}
    For $e_i,f_i\in \mathbf{U}^{b}_m$, we have
    \begin{equation*}
        \begin{aligned}
            X^{[A]}\cdot e_i=&\sum_{\substack{-n\leq j\leq 0\\a_{i,j}^{\#}>0}}
            v^{\sum\limits_{p\geq j}(a_{i+1,p}-a_{i,p})+1+\delta_{i,0}}[a_{i,j}-\delta_{i,0}\delta_{j,0}]X^{[A+E_{i+1,j}^{\theta}-E_{i,j}^{\theta}]}\\
            &+\sum_{\substack{0< j\leq n\\a_{i,j}>0}}v^{\sum\limits_{p\geq j}(a_{i+1,p}-a_{i,p})+1}[a_{i,j}]X^{[A+E_{i+1,j}^{\theta}-E_{i,j}^{\theta}]},\\
            X^{[A]}\cdot f_i=&\sum_{\substack{-n\leq j\leq n\\a_{i+1,j}>0}}
            v^{\sum\limits_{p\leq j}(a_{i,p}-a_{i+1,p})+1}[a_{i+1,j}]X^{[A+E_{i,j}^{\theta}-E_{i+1,j}^{\theta}]}.
        \end{aligned}
    \end{equation*}
    If $\mathfrak{c}=\imath$, then for $t_0\in \mathbf{U}^{\imath}_n$, we have
    \begin{equation*}
        \begin{aligned}
            t_0\cdot X^{[A]}&=v^{\sum\limits_{p>0}(a_{p,1}-a_{p,-1})+a_{0,1}}X^{[A]}+\sum\limits_{\substack{-m\leq j \leq m \\ a_{j,1}>0}} v^{\sum\limits_{p>j}(a_{p,1}-a_{p,-1})-a_{j,0}+\sum\limits_{k<0}\delta_{j,k}}[a_{j,1}]X^{[A+E_{j,-1}^{\theta}-E_{j,1}^{\theta}]}.
        \end{aligned}
    \end{equation*}
    If $\mathfrak{b}=\imath$, then for $t_0\in \mathbf{U}^{\imath}_m$, we have
    \begin{equation*}
        \begin{aligned}
            X^{[A]}\cdot t_0
        &=v^{\sum\limits_{p>0}(a_{1,p}-a_{-1,p})+a_{1,0}}X^{[A]}+\sum\limits_{\substack{-n\leq j \leq n \\ a_{1,j}>0}} v^{\sum\limits_{p>j}(a_{1,p}-a_{-1,p})-a_{0,j}+\sum\limits_{k<0}\delta_{j,k}}[a_{1,j}]X^{[A+E_{-1,j}^{\theta}-E_{1,j}^{\theta}]}.
        \end{aligned}
    \end{equation*}
\end{proposition}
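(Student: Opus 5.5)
The proof is a restriction argument. We apply Propositions \ref{prop:the left action of Uj on Snij}, \ref{prop: the left action of Ui on Snji} and \ref{prop:the right action of Ub on Snbc} in rank $l=\max\{m,n\}$ to the matrix $\iota(A)\in\Xi^{\mathfrak{b}\mathfrak{c}}_l$. We then check that every term involving an index outside the rectangle $|i|\le m$, $|j|\le n$ either vanishes or contributes trivially to an exponent. The generators $e_i,f_i,t_0$ of $\mathbf{U}^{\mathfrak{c}}_n$ are viewed inside $\mathbf{U}^{\mathfrak{c}}_l$ with the same names, so $0\le i<n$. Similarly, the generators of $\mathbf{U}^{\mathfrak{b}}_m$ are viewed in $\mathbf{U}^{\mathfrak{b}}_l$ with $0\le i<m$. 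Write $B=\iota(A)$, so $b_{p,q}=a_{p,q}$ inside the rectangle and $b_{p,q}=0$ outside it.

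For the left action of $e_i$ with $\mathfrak{c}=\jmath$, the rank-$l$ formula is a sum over $-l\le j\le l$ with condition $b_{j,i+1}>0$. Since $|i+1|\le n$, the column index lies in range, and $b_{j,i+1}=0$ for $|j|>m$. Hence the sum truncates to $-m\le j\le m$. The exponent $\sum_{p>j}(b_{p,i+1}-b_{p,i})$ also truncates to $p\le m$, because the extra summands are zero. Moreover, $B+E^{\theta}_{j,i}-E^{\theta}_{j,i+1}=\iota(A+E^{\theta}_{j,i}-E^{\theta}_{j,i+1})$, since all modified entries lie in the rectangle. This gives the stated formula, and it also confirms that $S^{\mathfrak{b}\mathfrak{c}}_{m|n}$ is stable. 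The same reasoning applies verbatim to $f_i$, including the split at $j=0$ with $a^{\#}$ and the $\delta_{i,0}$ corrections. For $\mathfrak{c}=\imath$, one uses Proposition \ref{prop: the left action of Ui on Snji} instead; its $e_i,f_i$ formulas are of the same shape and are handled identically. For $t_0$, the column indices $0,\pm1$ lie in range because $n\ge1$, and the row sums truncate as before. The diagonal coefficient $v^{\sum_{p>0}(a_{p,1}-a_{p,-1})+a_{0,1}}$ is unchanged because $b_{p,\pm1}=0$ for $p>m$.

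The right actions are treated symmetrically, with rows and columns interchanged, using Proposition \ref{prop:the right action of Ub on Snbc}. Now $i,i+1\le m$, and the sums over $j$ together with the exponents $\sum_{p\ge j}$ and $\sum_{p\le j}$ truncate to $|j|\le n$ and $|p|\le n$.

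The only point needing care, which I expect to be the main obstacle, is bookkeeping of the boundary cases. First, $i+1$ may equal $n$ for the left action, or $m$ for the right action; this is still inside the rectangle, so no entry falls outside. Second, the embedding $\mathbf{U}^{\mathfrak{c}}_n\subset\mathbf{U}^{\mathfrak{c}}_l$ sends generators to generators of the same name, so the formulas in rank $l$ apply literally. Third, the special entries $a_{0,0}=1$ and $a^{\#}_{0,0}$ are preserved by $\iota$. Once these are checked, each formula follows directly from the corresponding formula in rank $l$.
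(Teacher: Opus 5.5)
Your proposal is correct and takes the same approach as the paper. The paper also obtains these formulas by restricting the rank-$l$ formulas of Section 3 to $\iota(A)$ and noting that $S^{\mathfrak{b}\mathfrak{c}}_{m|n}$ is stable. The one step you leave implicit, in your phrase ``handled identically'', is this: when $\mathfrak{c}=\imath$ (resp.\ $\mathfrak{b}=\imath$) the generators have $i\neq 0$, so $a^{\#}_{j,i}=a_{j,i}$ and every $\delta_{i,0}$ vanishes. That is exactly why the split formulas in the statement agree with the unsplit $\imath$-formulas you are restricting.
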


\begin{theorem}\label{Theorem: main thm}
    Let $\mathcal{V}^{\mathfrak{b}\mathfrak{c}}_{m|n}$ and ${}_\mathbb{A}\mathbb{T}^{\mathfrak{b}\mathfrak{c}}_{m|n}$ be the $(\mathbf{U}^{c}_n,\mathbf{U}^{b}_m)$-bimodules introduced in \cite{LX22}.  They are isomorphic as $(\mathbf{U}^{c}_n,\mathbf{U}^{b}_m)$-bimodules (see \cite[Theorem 5.7]{LX22}). As $(\mathbf{U}^{c}_n,\mathbf{U}^{b}_m)$-bimodules,
    \begin{equation*}
        \begin{aligned} S^{\mathfrak{b}\mathfrak{c}}_{m|n}\cong\mathcal{V}^{\mathfrak{b}\mathfrak{c}}_{m|n}:X^{[A]}\mapsto \tilde{t}^{(A)}.
        \end{aligned}
\end{equation*}
Thus, $S^{\mathfrak{b}\mathfrak{c}}_{m|n}\cong{}_\mathbb{A}\mathbb{T}^{\mathfrak{b}\mathfrak{c}}_{m|n}$ as $(\mathbf{U}^{c}_n,\mathbf{U}^{b}_m)$-bimodules.
\end{theorem}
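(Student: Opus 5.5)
The plan is to compare bases and generator actions directly. The map $X^{[A]}\mapsto \tilde{t}^{(A)}$ sends the basis $\{X^{[A]}\mid A\in\Xi^{\mathfrak{b}\mathfrak{c}}_{m|n}\}$ of $S^{\mathfrak{b}\mathfrak{c}}_{m|n}$ bijectively onto the basis $\{\tilde{t}^{(A)}\mid A\in\Xi^{\mathfrak{b}\mathfrak{c}}_{m|n}\}$ of $\mathcal{V}^{\mathfrak{b}\mathfrak{c}}_{m|n}$ from \cite{LX22}, since both are indexed by the same set. It is therefore an $\mathbb{A}$-linear isomorphism. It then suffices to check that it intertwines the actions of the generators: $e_i,f_i,d_j^{\pm1}$ (and $t_0$ when the relevant side is of type $\imath$) of $\mathbf{U}^{\mathfrak{c}}_n$ on the left, and those of $\mathbf{U}^{\mathfrak{b}}_m$ on the right. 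The left and right actions on $S^{\mathfrak{b}\mathfrak{c}}_{m|n}$ are given explicitly in Proposition \ref{prop: (Unc,Umb) module structure of Smnbc}, with the diagonal generators coming from Propositions \ref{prop:the left action of Uj on Snij}, \ref{prop: the left action of Ui on Snji} and \ref{prop:the right action of Ub on Snbc}. The last assertion, $S^{\mathfrak{b}\mathfrak{c}}_{m|n}\cong{}_\mathbb{A}\mathbb{T}^{\mathfrak{b}\mathfrak{c}}_{m|n}$, then follows by composing with the bimodule isomorphism $\mathcal{V}^{\mathfrak{b}\mathfrak{c}}_{m|n}\cong{}_\mathbb{A}\mathbb{T}^{\mathfrak{b}\mathfrak{c}}_{m|n}$ of \cite[Theorem 5.7]{LX22}.

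I would recall from \cite{LX22} the explicit formulas for $e_i\cdot\tilde t^{(A)}$, $f_i\cdot\tilde t^{(A)}$, $t_0\cdot\tilde t^{(A)}$ and $d_j\cdot\tilde t^{(A)}$, and for the corresponding right actions. These are the multiplication formulas of Chevalley generators on the (normalized) basis of the coordinate coalgebra side, and they are expressed through the same shifts $A\pm E^{\theta}_{j,i}\mp E^{\theta}_{j,i+1}$. I would proceed case by case, treating $(\mathfrak{b},\mathfrak{c})=(\imath,\jmath)$ and $(\jmath,\imath)$ separately, and within each case the left and right actions separately. For each generator I match the matrix shifts term by term and compare quantum integer coefficients and powers of $v$. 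The diagonal generators $d_j$ and the generators $e_i$, $f_i$ with $i>0$ should match immediately: their formulas are the ``$\theta$-symmetrized'' versions of the $\mathbf{U}_v(\mathfrak{gl})$ formulas, exactly of the shape in \cite{LX22}.

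The delicate terms are the ones involving the middle row or column: $f_0$ on the $\jmath$ side, with the correction $[a_{0,0}-1]$, the use of $a^{\#}$, and the extra $v^{-\delta_{i,0}}$; the right action of $e_0$, with the extra $v^{\delta_{i,0}}$; and above all $t_0$, with its diagonal term $v^{\sum_{p>0}(a_{p,1}-a_{p,-1})+a_{0,1}}X^{[A]}$ and the exponent correction $-a_{j,0}+\sum_{k<0}\delta_{j,k}$. I expect the main obstacle to be these exponents, because the normalization of $\tilde t^{(A)}$ in \cite{LX22} may differ from $X^{[A]}$ by a power of $v$ depending on $A$. If a direct comparison fails, I would first compute the ratio of the coefficients on each side for $e_i$ and $f_i$. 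That ratio should be of the form $v^{c(A')-c(A)}$ for a quadratic function $c$ (similar to $Q_{k,b}$). In that case I would adjust the map to $X^{[A]}\mapsto v^{c(A)}\tilde t^{(A)}$, or equivalently identify $\tilde t^{(A)}$ with such a rescaled basis. I would then verify that the same $c$ also reconciles the $t_0$ formula, using the identity $[a+1]=v^{a}+v^{-1}[a]$, which already appears in the proof of Proposition \ref{prop: the left action of Ui on Snji}. Finally, I would check that the left and right actions on $S^{\mathfrak{b}\mathfrak{c}}_{m|n}$ commute. This holds automatically once both are transported from $\mathcal{V}^{\mathfrak{b}\mathfrak{c}}_{m|n}$, so it needs no separate argument.
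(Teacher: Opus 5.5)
Your plan is the paper's own proof. Both arguments identify the bases $\{X^{[A]}\}$ and $\{\tilde{t}^{(A)}\}$ through the common index set $\Xi^{\mathfrak{b}\mathfrak{c}}_{m|n}$, match the generator formulas of Proposition \ref{prop: (Unc,Umb) module structure of Smnbc} (together with the $d_j$ formulas from the earlier propositions) against \cite[Proposition 5.5]{LX22}, and then compose with \cite[Theorem 5.7]{LX22}. The paper asserts that this comparison matches exactly, so your rescaling fallback $X^{[A]}\mapsto v^{c(A)}\tilde{t}^{(A)}$ should not be needed; note also that it would give an isomorphism by a different map from the one stated in the theorem.
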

\begin{proof}
    Comparing the formulas in Proposition \ref{prop: (Unc,Umb) module structure of Smnbc} and \cite[Proposition 5.5]{LX22}, the statement follows.
\end{proof}

\end{document}